\renewcommand{\MultipleCiteKeyWarning}[2]{}
\def\blfootnote{\xdef\@thefnmark{}\@footnotetext}
\newcommand{\gitshash}{NA}
\newcommand{\gitauthsdate}{NA}
\newcommand{\dimension}{n}
\newcommand{\dist}{\operatorname{dist}}
\newcommand{\diam}{\operatorname{diam}}
\newcommand{\supp}{\operatorname{supp}}
\newcommand{\interior}{\operatorname{int}}
\newcommand{\esssup}{\operatorname*{ess\,sup}}
\newcommand{\essinf}{\operatorname*{ess\,inf}}
\newcommand{\argmax}{\operatorname*{arg\,max}}
\newcommand{\divo}{\operatorname{div}}
\newcommand{\dx}{\;dx}
\newcommand{\diff}[1]{\;d{#1}}
\newcommand{\abs}[1]{\left|#1\right|}
\newcommand{\pth}[1]{\left(#1\right)}
\newcommand{\bra}[1]{\left[#1\right]}
\newcommand{\set}[1]{{\left\{#1\right\}}}
\newcommand{\norm}[1]{\left\|#1\right\|}
\newcommand{\cl}[1]{\overline{#1}}	
\newcommand{\e}{\ensuremath{\varepsilon}}
\newcommand{\R}{\ensuremath{\mathbb{R}}}
\newcommand{\Rd}{\ensuremath{{\mathbb{R}^{\dimension}}}}
\newcommand{\Rn}{\Rd}
\newcommand{\Z}{\ensuremath{\mathbb{Z}}}
\newcommand{\T}{\ensuremath{\mathbb{T}}}
\newcommand{\Tn}{{\T^\dimension}}
\renewcommand{\labelenumi}{(\alph{enumi})}
\renewcommand{\theenumi}{(\alph{enumi})}
\numberwithin{equation}{section}
\newtheorem{theorem}{Theorem}[section]
\newtheorem{lemma}[theorem]{Lemma}
\newtheorem{proposition}[theorem]{Proposition}
\newtheorem{corollary}[theorem]{Corollary}
\newtheorem{definition}[theorem]{Definition}
\theoremstyle{definition}
\newtheorem{remark}[theorem]{Remark}
\newcommand{\sign}{\operatorname{sign}}
\newcommand{\E}{\mathcal{E}}
\newcommand{\aff}{\operatorname{aff}}
\newcommand{\TT}{\mathcal{T}}
\newcommand{\SW}{\sigma^{\rm sl}}
\newcommand{\SE}{\E^{\rm sl}}
\newcommand{\hp}{{\hat p}}
\definecolor{darkgreen}{rgb}{0,0.4,0}
\title[Crystalline MCF with a nonuniform driving force term]{Viscosity solutions for the crystalline mean curvature flow with a nonuniform driving force term}
\author[Y. Giga]{Yoshikazu Giga}
\address[Y. Giga]{Graduate School of Mathematical Sciences, University of Tokyo, 3-8-1 Komaba Meguro-ku,
Tokyo 153-8914, Japan.}
\email{labgiga@ms.u-tokyo.ac.jp}
\author[N. Po\v{z}\'{a}r]{Norbert Po\v{z}\'{a}r}
\address[N. Po\v{z}\'{a}r]{Faculty of Mathematics and Physics, Institute of Science and Engineering, Kanazawa University,
Kakuma town, Kanazawa, Ishikawa 920-1192, Japan.}
\email{npozar@se.kanazawa-u.ac.jp}
\date{\today\ (git: \gitauthsdate, \gitshash)}
\keywords{}
\subjclass[2010]{}
\begin{document}

\begin{abstract}
A general purely crystalline mean curvature flow equation with a nonuniform driving force term is considered.  The unique existence of a level set flow is established when the driving force term is continuous and spatially Lipschitz  uniformly in time. By introducing a suitable notion of a solution a comparison principle of continuous solutions is established for equations including the level set equations. An existence of a solution is obtained by stability and approximation by smoother problems. A necessary equi-continuity of approximate solutions is established. It should be noted that the value of crystalline curvature may depend not only on the geometry of evolving surfaces but also on the driving force if it is spatially inhomogeneous.
\end{abstract}

\maketitle

\tableofcontents

\section{Introduction}

In our previous works \cite{GP_ADE,GP_CPAM}, we constructed a unique global-in-time level set flow for the crystalline mean curvature flow of the form
\[
	V = g(\nu, \kappa_\sigma).
\]
Here $V$ is the normal velocity of an evolving hypersurface in $\mathbb{R}^n$, $n \geq 2$, in the direction of a unit normal vector field $\nu$ and $\kappa_\sigma$ is a (purely) crystalline mean curvature of the hypersurface. The anisotropy $\sigma$ is assumed to be crystalline, that is, $\sigma: \Rn \to \R$ is a positively one-homogeneous function such that $\set{\sigma < 1}$ is a bounded convex polytope.
 The function $g \in C(\mathcal S^{n-1} \times \R)$ is a given function that is non-decreasing in the second variable so that the problem is at least formally degenerate parabolic; here, $\mathcal S^{n-1}$ denotes the unit sphere in $\Rn$. We are using the convention that $V = \kappa_\sigma$ is the usual mean curvature flow when $\sigma$ is isotropic so that $\kappa_\sigma$ is the usual mean curvature.

Our goal is to extend the result in \cite{GP_ADE} to the problem
\begin{align}
\label{mcf}
V = g\big(\nu, \kappa_\sigma + f(x,t)\big),
\end{align}
where $f=f(x,t)$ is a continuous function that is Lipschitz continuous in space variable $x$ uniformly in time $t$.
 Namely, we consider a crystalline mean curvature flow with a nonuniform driving force term.
 We introduce a suitable notion of viscosity solutions to the level set equation for~\eqref{mcf}, which looks slightly weaker than those in \cite{GP_ADE,GP_CPAM}.
 Our main result reads:
\begin{theorem}
\label{th:existence-flow}
Assume that $g \in C(\mathcal S^{n-1} \times \R)$ is Lipschitz continuous in the second variable uniformly in the first variable and non-decreasing in the second variable, $\sigma$ is a crystalline anisotropy and $f \in C(\Rn \times \R)$ is Lipschitz continuous in space uniformly in time.
 Then there is a unique global-in-time level set flow to \eqref{mcf} when the initial hypersurface is compact.
\end{theorem}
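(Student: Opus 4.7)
My plan is to adapt the level-set framework developed in \cite{GP_ADE} to accommodate the nonuniform driving force $f=f(x,t)$. First I would pass to the level set formulation of \eqref{mcf}: writing the evolving hypersurface as $\set{u(\cdot,t) = 0\}$ with $\nu = -\grad u / \abs{\grad u}$, equation \eqref{mcf} becomes a degenerate parabolic equation of the form
\[
    u_t + \abs{\grad u}\, g\!\left(-\frac{\grad u}{\abs{\grad u}},\ \kappa_\sigma[u] + f(x,t)\right) = 0,
\]
where $\kappa_\sigma[u]$ is the nonlocal crystalline curvature of the level set through $(x,t)$. Since $g(\nu,\cdot)$ is non-decreasing, the standard geometric invariance arguments carry over, so the level set $\set{u(\cdot,t) = 0}$ depends only on the initial level set and defines the candidate flow.

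The decisive step is the comparison principle for semicontinuous viscosity sub/supersolutions of this level-set equation. I would extend the definition from \cite{GP_ADE,GP_CPAM} by folding the driving term $f$ into the admissible test-functional on each candidate facet, producing a slightly weaker notion than before. The proof proceeds by doubling of variables in the spatial variable, using a sup/inf-convolution-type flattening that matches the facet structure of $\sigma$. The Lipschitz bound on $f$ in $x$ is essential: it lets one dominate $f(x,t)-f(y,t)$ by the doubling parameter, so the extra forcing behaves as a controllable lower-order perturbation inside the doubled test functional. The Lipschitz continuity and monotonicity of $g$ in the second variable then convert this perturbation into a term absorbable by a Gronwall-type estimate in time.

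For existence I would construct solutions by approximation. Replace $\sigma$ by a sequence $\sigma_\e$ of smooth strictly convex anisotropies with $\sigma_\e \to \sigma$, giving a genuinely parabolic problem for which classical viscosity theory yields a unique level set flow $u_\e$. The principal technical step is proving equi-continuity of $\{u_\e\}$ in space and time uniformly in $\e$; this requires barrier arguments compatible with Wulff-type geometry and a modulus-of-continuity estimate that does not degenerate as $\sigma_\e \to \sigma$. The nonuniform $f$ contributes a bounded drift that one controls via its Lipschitz constant and comparison with translated Wulff shapes. Stability of the (suitably weak) notion of solutions under $\e \to 0$, combined with the comparison principle above, identifies any limit as the unique viscosity solution.

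The main obstacle is the interplay between the nonlocality of $\kappa_\sigma$ and the spatial dependence of $f$: the crystalline curvature on a facet is obtained by solving a variational problem on that facet, and once $f$ varies over the facet it enters this minimization, so the effective value of $\kappa_\sigma$ at a point depends on $f$ restricted to the whole facet rather than only at the point. Designing a notion of viscosity solution compatible with this facet-level coupling, while remaining stable under the smoothing $\sigma_\e \to \sigma$ and strong enough for comparison, is the crux of the argument, and is precisely where the proposed weakening of the definition from \cite{GP_ADE,GP_CPAM} is needed.
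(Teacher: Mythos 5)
Your overall plan matches the paper's strategy closely: level-set reformulation, a modified notion of viscosity solution that folds $f$ into the facet-level variational problem, comparison via doubling with a flattening argument adapted to the crystalline structure, and existence by approximation with smooth strictly convex $\sigma_\e$ plus uniform equicontinuity estimates. Your identification of the central obstruction — that $\kappa_\sigma + f$ on a facet must be understood as the canonical restriction of $\partial\E_f$, so $f$ enters the facet minimization nonlocally — is exactly right and is the key insight.

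There is, however, one genuine gap. You assert that ``the decisive step is the comparison principle for semicontinuous viscosity sub/supersolutions.'' This is not available, and the paper explicitly states that it is not known how to prove comparison when both the sub- and supersolution are merely semicontinuous: the flattening argument requires at least one of them to be \emph{continuous} (indeed, the paper's Theorem~\ref{th:comparison-principle} further needs either that the continuous one is Lipschitz in space, or that $F$ has sublinear growth in the curvature variable together with $f$ Lipschitz in both $x$ and $t$ — a restriction you do not address). As a consequence, the Barles--Perthame route you implicitly invoke (relaxed semilimits, then semicontinuous comparison to force $\limsup^* = \liminf_*$) breaks down. The logical role of the equicontinuity estimates is therefore not merely to get compactness: the uniform spatial Lipschitz bound (with Gronwall-type constant $e^{L_g L_f t}$ coming precisely from the interplay of the Lipschitz constants of $g$ and $f$) and the uniform $1/2$-Hölder bound in time are what guarantee that the relaxed semilimits are \emph{continuous}, which is the prerequisite for the comparison theorem to apply at all. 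Without them the proof of uniqueness of the limit does not close. You should reorganize the argument so that comparison is only claimed for (at least one) continuous solution, and the a priori continuity of the approximating family is elevated to a load-bearing step rather than a technical aside; the barrier construction for the time-Hölder bound also uses the positive one-homogeneity of $\sigma$ (via comparison with scaled Wulff functions), which you should build in explicitly.
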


The assumption on $g$ prohibits superlinear growth in $\kappa_\sigma + f$.
 However, it is still quite general since it allows nonlinear dependence in $\kappa_\sigma + f$.

The general strategy to prove this result is along the line of \cite{GP_CPAM}.
 However, the problem is substantially more difficult when $f$ is spatially nonuniform even if $f$ is time-independent.

We have to understand $\kappa_\sigma + f$ at a given time $t$ as one term given as the canonical restriction of the subdifferential of the functional
\begin{align*}
\mathcal{G}_t(E) := \int_{\partial E} \sigma(\nu) \diff S + \int_E f_t\dx,
\end{align*}
where $f_t := f(\cdot, t)$, to be consistent with the formal gradient flow structure, with Chambolle et al. \cite{CMP,CMNP} and previous work in 1D in \cite{GGRybka_ARMA,GGNakayasu_GeomPDE}; see also \cite[Section~2.3]{BraidesMalusaNovaga}. This becomes important in the proof of stability in Section~\ref{sec:stability}.

To establish Theorem~\ref{th:existence-flow}, we study the well-posedness of the level set formulation for \eqref{mcf}.
Following the convention of \cite{GP_CPAM}, we take a level set function $u = u(x,t)$ such that
its every sublevel set is the solution of \eqref{mcf}.
Then $u$ is a solution of
\begin{align}
\label{pde}
u_t + F\big(\nabla u, \divo(\nabla\sigma(\nabla u)) - f\big) = 0,
\end{align}
where
\begin{align}
\label{level-set-F}
F(p,\xi) := |p| g\pth{\frac p{|p|}, - \xi}.
\end{align}

Let us assume that $f = f(x)$ for now to simplify the notation.  To define $\kappa_\sigma + f$ for admissible faceted functions on their facets, we take
\begin{align}
\label{Ef}
\E_f(\psi) := \int \sigma(\nabla \psi) + f\psi\dx \qquad \text{for } \psi \in Lip,
\end{align}
where the integral is taken over some appropriate domain like $\T^n$,
and define
\begin{align*}
\Lambda_f[\psi] = \divo z_{\rm min} - f = - \partial^0 \E_f(\psi),
\end{align*}
where $z_{\rm min}$ is a minimizer of
\begin{align*}
\norm{\divo z - f}_{L^2}
\end{align*}
over all $z \in L^\infty$, $\divo z \in L^2$ such that
\begin{align*}
z \in \partial \sigma(\nabla \psi) \qquad \text{a.e.}
\end{align*}

Note that $\partial \E_f(\psi) = \partial \E_0(\psi) + f$. Therefore $f$ does not change admissible functions or admissible facets, only the value of the canonical restriction $\partial^0 \E_f(\psi)$. We will see below that $\Lambda_f$ satisfies a comparison principle, Proposition~\ref{pr:Lambda-comparison}.

We define a notion of viscosity solutions for \eqref{pde} (Definition~\ref{def:visc-solution}), which is a generalization of the notion introduced in the previous work \cite{GP_ADE,GP_CPAM} to allow for the nonuniform driving force term.
We first establish a comparison principle (Theorem~\ref{th:comparison-principle}) for equations including the level set equation of \eqref{mcf}. However, our flattening argument \cite{GG_ARMA_graphs,GP_ADE,GP_CPAM} requires that one of sub- and supersolutions is continuous.
 This requirement is unnecessary when $n=2$ since the set of singular directions of the interfacial energy $\sigma$ is compact \cite{GGRybka_ARMA}.
 We next prove the existence of a solution by showing the stability for relaxed limits of solutions of \eqref{pde} with regularized $\sigma$: both with quadratic growth (Theorem~\ref{th:stability}) and with linear growth (Theorem~\ref{th:stability-linear-growth}).
 However, to show the full convergence through the comparison principle we need to show that the relaxed semilimit must be continuous.
 For this purpose, we establish a uniform Lipschitz bound in space (Theorem~\ref{L}) and a uniform 1/2-H\"{o}lder bound in time (Theorem~\ref{H}) for an approximate solution.
 Although a Lipschitz bound is well known in the elliptic case even for viscosity solutions \cite{Barles}, it is not trivial to adapt it in the parabolic case, especially in our setting. We shall give a direct proof for a spatial Lipschitz bound for viscosity solutions for level set equations without appealing to the classical theory of quasilinear or fully nonlinear uniformly parabolic equations; see e.~g.~\cite{Li,LSU,Lu}. See, for example, also \cite{AG,GOS} for applications to viscosity solutions.  We also give $1/2$-Hölder bound in time by constructing suitable barriers.
Using these results, we deduce the unique existence of solutions of \eqref{pde} (Theorem~\ref{th:existence}). The proof of Theorem~\ref{th:existence-flow} is outlined at the end of Section~\ref{sec:lipschitz-bound}.

\subsection*{Literature overview.}
 The crystalline mean curvature flow was introduced in mathematical community by Angenent and Gurtin \cite{AGu} and independently by Taylor \cite{T91} around thirty years ago.
 Since then, there is a large number of literature.
 The bibliography of \cite{GP_ADE,GP_CPAM} includes several key references on the crystalline mean curvature flow or flow with constant driving force term.
 We here mention references related to the crystalline mean curvature flow with nonuniform driving force term.
 The problem is far more difficult than the case of constant driving force even for the problem of planar motion because the expected speed on facets may not be constant, which may cause facet bending or splitting.
 A first global unique existence result has been established in \cite{GG2} for a graph-like curve based on the theory of maximal monotone operators.
 Several explicit facet splitting solutions are constructed in \cite{GG2}.
 In \cite{BGN} planar, crystalline flow with nonuniform driving force was constructed but under the assumtion that the driving force preserves facets, in other words, facet splitting and bending does not occur.
 Several conditions for the preservation of facets are given especially for the Stefan problem when the anisotropy is fixed like its Wulff shape is a cylinder \cite{GR1}, \cite{GR2}, \cite{GR3}.
 An explicit facet bending solution is given in a planar motion under rectangular anisotropy \cite{GR4}, \cite{GR5}, \cite{GGoR1}, \cite{GGoR2}, \cite{GGoR3}; see also \cite{MR1}, \cite{MR2}.
 For a graph-like curve a general global well-posedness results are established in \cite{GGNakayasu_GeomPDE,GGRybka_ARMA} for a general equation like \eqref{mcf}.
 For a closed curve less is known.
 In \cite{CN} a local-in-time strong solution is constructed when the force term is Lipschitz in space-time or spatially independent for the crystalline curvature flow in the plane.
 It is quite recent that the level set approach is extended to higher dimensional setting even for constant driving force \cite{GP_ADE,GP_CPAM}.
 In \cite{CMNP} quite general driving force is allowed but the equation is of the form $V=M(\nu)(\kappa_\sigma+f)$ with ``convex'' mobility.
 They proved the unique existence of the level set flow for any initial hypersurface which may be unbounded.
 In our notion, they assume that $g$ is linear in the last variable.
 Although they allow general anisotropy $\sigma$, not necessarily purely crystalline, their method essentially depends on linearity of $g$ in the last variable.
 Their method is substantially different from ours.

\medskip
This paper is organized as follows.
 In Section~\ref{se:visc-sol}, we introduce a notion of a viscosity solution.
 In Section~\ref{se:comp-principle}, a comparison principle is established.
 In Section~\ref{sec:stability}, we prove stability of a solution.
 In Section~\ref{sec:lipschitz-bound}, we prove a necessary Lipschitz bound in space and $1/2$-H\"{o}lder bound in time to show the continuity of the limit.
 In Section~\ref{sec:proof-of-nonexistence}, we warn that our value $\kappa_\sigma + f$ is not a simple sum of $\kappa_\sigma$ and $f$.
 In the Appendix~\ref{sec:resolvent-problem}, we give a proof of a Lipschitz bound of our resolvent problem.

\section{Viscosity solutions}
\label{se:visc-sol}

\subsection{Facet}

Let $\sigma$ be an anisotropy, that is, let $\sigma: \Rn \to \R$ be positively one-homogeneous function such that $\set{\sigma < 1}$ is a bounded convex set.
Suppose that $U \subset \Rn$ is an open set and $\psi \in Lip(U)$. We define the set of Cahn-Hoffman vector fields for $\psi$ as
\begin{align*}
CH(\psi; U) := \set{z \in X^2(U): z \in \partial \sigma(\nabla \psi) \text{ a.~e.}}.
\end{align*}
Here $X^2(U) := \set{z \in L^\infty(U): \divo z \in L^2(U)}$; see \cite{Anzellotti}.
If $CH(\psi; U)$ is nonempty, we define the $\sigma^\circ$-($L^2$) divergence of $\psi$ for any $f \in L^2(U)$ as
\begin{align}
\label{Lipschitz-min-divergence}
\Lambda_f[\psi] := \divo z_{\rm min} - f \qquad \text{on } \set{\psi = 0},
\end{align}
where $z_{\rm min} \in CH(\psi; U)$ minimizes $z \mapsto \norm{\divo z - f}_{L^2(U)}$. Note that since \[\divo CH(\psi; U) := \set{\divo z: z \in CH(\psi; U)}\]  is a closed, convex and nonempty (by assumption) subset of $L^2(U)$ there exists a unique minimizing $\divo z_{\rm min}$ (but $z_{\rm min}$ might not be unique).

Let us recall the comparison principle for $\sigma^\circ$-($L^2$) minimal divergence, proved in \cite{GP_ADE} for $f \equiv 0$. The generalization to $f \not\equiv 0$ is straightforward.

\begin{proposition}[{cf. \cite[Proposition~4.12]{GP_ADE}}]
  \label{pr:Lambda-comparison}
  If $\psi_1, \psi_2$ are two
  Lipschitz functions on an open set $U$ such that their zero sets are compact subsets of $U \subset \Rn$, $f_1, f_2 \in L^2(U)$ and
  $\Lambda_{f_i}[\psi_i]$, $i = 1,2$, are well-defined, then
  \begin{align*}
    \sign \psi_1 \leq \sign \psi_2 \quad \text{on $U$} \quad \text{and} \quad f_1 \geq f_2 \quad \text{a. e. on $U$}
  \end{align*}
  imply
  \begin{align*}
    \Lambda_{f_1}[\psi_1] \leq \Lambda_{f_2}[\psi_2], \qquad \text{a. e. on } \set{\psi_1 =
    0} \cap \set{\psi_2 = 0}.
  \end{align*}
\end{proposition}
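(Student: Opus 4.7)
The plan is to adapt the argument from \cite[Proposition~4.12]{GP_ADE}, which handles $f_1 = f_2 \equiv 0$, by treating the driving force terms as an additive shift in the $L^2$ objective. The key structural observation is that $CH(\psi_i; U)$ depends only on $\psi_i$ (through $\partial \sigma(\nabla \psi_i)$) and is independent of $f_i$; the $f_i$ enters only through the minimization $\|\divo z - f_i\|_{L^2(U)}$. Equivalently, $\divo z_{i,\min}$ is the $L^2(U)$-projection of $f_i$ onto the closed convex set $C_i := \divo CH(\psi_i; U)$, and $\Lambda_{f_i}[\psi_i] = \divo z_{i,\min} - f_i$.

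I would argue by contradiction: assume the set $A := \set{\Lambda_{f_1}[\psi_1] > \Lambda_{f_2}[\psi_2]} \cap \set{\psi_1 = 0} \cap \set{\psi_2 = 0}$ has positive measure. Since each $\psi_i$ is Lipschitz, $\nabla \psi_i = 0$ a.e.\ on $\set{\psi_i = 0}$, so $\partial \sigma(\nabla \psi_i) = \partial \sigma(0) = \set{\sigma^\circ \leq 1}$ a.e.\ there. Consequently, any Cahn--Hoffman field for $\psi_j$ is automatically admissible for $\psi_i$ on the common zero set. This interchangeability is the same mechanism used in \cite{GP_ADE} to construct competitor fields and carries over verbatim.

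Following \cite{GP_ADE}, the sign condition $\sign \psi_1 \leq \sign \psi_2$ is used to patch Cahn--Hoffman fields across the zero sets in a way that respects the nested structure of $\set{\psi_1 \leq 0} \subset \set{\psi_2 \leq 0}$: one builds a competitor $\tilde z_1 \in CH(\psi_1; U)$ by modifying $z_{1,\min}$ on $A$ using $z_{2,\min}$ (and symmetrically for $\tilde z_2$). Evaluating the change in $\|\divo \tilde z_i - f_i\|_{L^2}^2$, the new cross-terms compared to \cite{GP_ADE} involve $f_1 - f_2$ integrated on $A$; the hypothesis $f_1 \geq f_2$ makes these terms cooperate with the strict decrease rather than oppose it, contradicting the minimality defining $z_{i,\min}$. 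The positivity of $|A|$ is thus impossible, yielding the claim.

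The main technical obstacle, as in the $f \equiv 0$ case, lies in ensuring that the patched field remains in $X^2(U)$ without spurious singular divergence along $\partial A$. This is resolved as in \cite{GP_ADE} by means of Anzellotti's trace framework together with a measurable selection / mollification argument exploiting the fact that the Cahn--Hoffman fields on the common facet all take values in the fixed convex set $\set{\sigma^\circ \leq 1}$. Adding an $L^2$ function $f_i$ to the minimization objective does not affect this trace analysis, which is why the extension from $f \equiv 0$ to the stated generality is straightforward.
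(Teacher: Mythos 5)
The paper does not actually prove Proposition~\ref{pr:Lambda-comparison}: it cites \cite[Proposition~4.12]{GP_ADE} and declares the extension to $f\not\equiv 0$ straightforward, so there is no written argument here to compare against. Judged on its own, your sketch has a genuine gap at the patching step and an imprecision in the variational argument. The obstruction to building a competitor by replacing $z_{1,\min}$ with $z_{2,\min}$ on $A$ is not the pointwise constraint---you are right that both fields lie in the fixed convex set $\partial\sigma(0)=\set{\sigma^\circ\le1}$ a.e.\ on the common facet---but that $\divo\bigl(z_{1,\min}\mathbf{1}_{A^c}+z_{2,\min}\mathbf{1}_A\bigr)$ is in general only a measure whose singular part on $\partial A$ is driven by the mismatch of the Anzellotti normal traces of $z_{1,\min}$ and $z_{2,\min}$ there; invoking ``Anzellotti's trace framework together with a measurable selection / mollification argument'' does not remove that mismatch, and no construction is given. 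This is exactly the delicate point already in the $f\equiv0$ case, and adding $f$ to the $L^2$ objective does nothing to relieve it. Separately, you should make the competitor precise: with the full swap the two optimality inequalities, written with $a:=\Lambda_{f_1}[\psi_1]$, $b:=\Lambda_{f_2}[\psi_2]$, $c:=f_1-f_2\ge0$ on $A:=\set{a>b}\cap\set{\psi_1=0}\cap\set{\psi_2=0}$, read $\int_A a^2\le\int_A(b-c)^2$ and $\int_A b^2\le\int_A(a+c)^2$, which add to $\int_A(a-b+c)c\ge0$; that holds automatically on $A$ and yields no contradiction. Only the first variation along $z_{i,\min}+t(z_{j,\min}-z_{i,\min})\mathbf{1}_A$ produces $\int_A(a-b)\bigl((a-b)+c\bigr)\le0$, hence a contradiction, and only there does $f_1\ge f_2$ ``cooperate'' in the way you assert.

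Given these obstructions, consider the route that avoids competitor patching entirely and is consistent with how the paper uses $\E_f$ in Lemma~\ref{le:lambda-const-dep} and Appendix~\ref{sec:resolvent-problem}: since $\partial\E_f(\zeta)=\partial\E_0(\zeta)+f$, the $\E_f$-resolvent of $\psi$ coincides with the $\E_0$-resolvent of $\psi-af$, and the latter is order-preserving. After reducing $\sign\psi_1\le\sign\psi_2$ to a pointwise order on suitable admissible representatives (which is where the real work of \cite{GP_ADE} sits), the hypotheses give $\psi_1-af_1\le\psi_2-af_2$, hence $\psi_{1,a}\le\psi_{2,a}$; restricting to the common zero set, where $\psi_1=\psi_2=0$, and using $\Lambda_f[\psi]=-\partial^0\E_f(\psi)=\lim_{a\searrow0}(\psi_a-\psi)/a$ there, you obtain the comparison directly in the limit $a\searrow0$ with $f_1\ge f_2$ entering cleanly through the shift $\psi-af$.
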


The dependence of the minimal divergence only on $\sign \psi$ motivates the following definition.
Let $\mathcal F = \set{\xi\mid \xi: \Rn \to \R}$ be the set of all real-valued functions on $\Rn$. We define the relation on $\mathcal F$ as
\begin{align*}
\xi_1 \sim \xi_2\qquad  \Leftrightarrow \qquad \sign \xi_1 = \sign \xi_2,
\end{align*}
where $\sign s = +1, 0, -1$, respectively, when $s > 0, s = 0, s < 0$.
The relation $\sim$ defines an equivalence relation on $\mathcal F$. We refer to the equivalence classes $[\xi] := \set{\psi: \psi \sim \xi}$ of $\mathcal F$ with respect to $\sim$ as \emph{(abstract) facets}. We write $[\xi_1] \preceq [\xi_2]$ when $\sign \xi_1 \leq \sign \xi_2$ and this relation defines a total order on the set of all facets $\mathcal F / \sim := \set{[\xi]: \xi \in \mathcal F}$.

We say that a facet $[\xi]$ is a \emph{$\sigma^\circ$-($L^2$) Cahn-Hoffman facet} if $\set{\xi = 0}$ is compact and there are an open set $U \subset \Rn$, $\set{\xi = 0} \subset U$, and a Lipschitz function $\psi \in [\xi]$ such that $CH(\psi; U) \neq \emptyset$.

\begin{remark}
\label{rem:periodic-extension}
The definition of $\sigma^\circ$-($L^2$) Cahn-Hoffman facet guarantees that there exists a periodic Lipschitz function $\zeta$ on $\Rn$ and a periodic vector field $z \in L^\infty(\Rn)$ such that $\zeta = \psi$ in a neighborhood of $\set{\xi = 0}$, $z \in \partial \sigma(\nabla \zeta)$ a.~e. and $\divo z \in L^2_{\rm loc}(\Rn)$. Indeed, we can assume that $U$ is bounded and hence $\partial U$ is compact. We set $\e := \min_{\partial U} |\psi| / 2 > 0$ and consider $\theta := \min(\e, \max(-\e, \psi))$. Clearly $z \in \partial \sigma(\nabla \theta)$ a.~e. as $\partial \sigma(p) \subset \partial \sigma(0)$ for any $p \in \Rn$. When $n \geq 2$, either $\theta \equiv \e$ or $\theta \equiv -\e$ outside of a large ball and hence a periodic extension is trivial. When $n = 1$ we might have to modify $\theta$ by an even extension with respect to some point outside of $U$ to guarantee that $\theta$ has one sign outside of a large ball. For details see \cite{GP_ADE}. This allows us to consider facets as objects on $\Tn$.
\end{remark}

For a $\sigma^\circ$-($L^2$) Cahn-Hoffman facet $[\xi]$ and $f \in L^2(\R^n)$ we define the $\sigma^\circ$-($L^2$) minimal divergence of the facet $[\xi]$, $\Lambda[\xi] \in L^2(\set{\xi = 0})$, as
\begin{align}
\label{minimal-divergence}
\Lambda_f[\xi] := \divo z_{\rm min} -f \qquad \text{on } \set{\xi = 0},
\end{align}
where $z_{\rm min} \in CH(\psi; U)$ minimizes $z \mapsto \norm{\divo z - f}_{L^2(U)}$ for some $U \subset \Rn$ open, $\set{\xi = 0} \subset U$, and Lipschitz $\psi \in [\xi]$ such that $CH(\psi; U) \neq \emptyset$. Note that $\Lambda_f[\xi]$ is well-defined since it does not depend on the choice of $\psi$ or $U$ by Proposition~\ref{pr:Lambda-comparison}, and the notation is consistent with \eqref{Lipschitz-min-divergence} since $\psi \in [\psi]$.

If $f$ is locally bounded, by comparison with Wulff functions like $\max(\sigma^\circ - c, 0)$ we can show that $\Lambda_f[\xi]$ is locally bounded on the interior of $\set{\xi = 0}$ by Proposition~\ref{pr:Lambda-comparison} and we can define
\begin{align}
\label{max-Lambda}
\underline\Lambda_f[\xi](x) := \lim_{\delta \searrow 0} \essinf_{B_\delta(x)} \Lambda_f[\xi], \qquad
\overline\Lambda_f[\xi](x) := \lim_{\delta \searrow 0} \esssup_{B_\delta(x)} \Lambda_f[\xi].
\end{align}
These values might differ since $\Lambda_f[\xi]$ is in general discontinuous.

\begin{lemma}
\label{le:lambda-const-dep}
For any $\sigma^\circ$-($L^2$) Cahn-Hoffman facet $[\chi]$ and $f \in L^2$,
\begin{align}
\label{lambda-const-dep}
\Lambda_{f + c}[\chi] = \Lambda_f[\chi] - c \qquad \text{a.e. on $\set{\chi = 0}$ for any constant $c \in \R$.}
\end{align}
\end{lemma}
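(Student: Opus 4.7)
The plan is to pass to the periodic setting via Remark~\ref{rem:periodic-extension} and exploit the fact that divergences of periodic vector fields have zero mean; shifting $f$ by a constant then leaves the minimizer invariant because constants are orthogonal to the mean-zero subspace. By Proposition~\ref{pr:Lambda-comparison}, $\Lambda_f[\chi]$ on $\set{\chi = 0}$ is independent of the admissible pair $(\psi, U)$, so I choose a periodic Lipschitz representative $\zeta \in [\chi]$ on $\Tn$ (as furnished by Remark~\ref{rem:periodic-extension}) and compute both $\Lambda_f[\chi]$ and $\Lambda_{f+c}[\chi]$ as restrictions to $\set{\chi = 0}$ of $\divo z_{\min} - f$ and $\divo z_{\min} - (f+c)$, where $z_{\min}$ denotes the $L^2(\Tn)$-minimizer over $z \in CH(\zeta; \Tn)$.

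Every periodic $z \in L^\infty(\Tn)$ with $\divo z \in L^2(\Tn)$ satisfies $\int_{\Tn} \divo z \dx = 0$ by the divergence theorem, so the admissible set of divergences
\[
\mathcal{D} := \set{\divo z : z \in CH(\zeta; \Tn)}
\]
is contained in the mean-zero subspace $V := \set{w \in L^2(\Tn) : \int_{\Tn} w \dx = 0}$.

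For any $g \in L^2(\Tn)$, set $\bar g := |\Tn|^{-1} \int_{\Tn} g \dx$. Using $\int_{\Tn} w \dx = 0$ for $w \in V$, a direct expansion yields
\[
\norm{w - g}_{L^2(\Tn)}^2 = \norm{w - (g - \bar g)}_{L^2(\Tn)}^2 + \bar g^{\,2}\,|\Tn|.
\]
The right-hand side differs from the left by a constant independent of $w$, so minimizing $\norm{w - g}^2$ over $\mathcal{D}$ is equivalent to minimizing $\norm{w - (g - \bar g)}^2$; the unique minimizing divergence depends on $g$ only through $g - \bar g$.

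Since $(f+c) - \overline{f+c} = f - \bar f$, the minimizers for $g = f$ and $g = f+c$ coincide, that is $\divo z_{\min}^{f+c} = \divo z_{\min}^{f}$ a.e.\ on $\Tn$, whence
\[
\Lambda_{f+c}[\chi] = \divo z_{\min}^{f+c} - (f+c) = \divo z_{\min}^{f} - f - c = \Lambda_f[\chi] - c
\]
a.e.\ on $\set{\chi = 0}$. The only delicate point is the periodic reduction carried out in the first paragraph, which hinges on combining the $(\psi,U)$-independence from Proposition~\ref{pr:Lambda-comparison} with the periodic extension from Remark~\ref{rem:periodic-extension}; once that is in place, the remainder is a short Hilbert-space decomposition.
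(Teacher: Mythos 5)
Your proof is correct and takes essentially the same route as the paper: you pass to the periodic torus via Remark~\ref{rem:periodic-extension}, observe that admissible divergences lie in the mean-zero subspace by the divergence theorem, and use orthogonality of constants to show the minimizing divergence is unchanged when $f$ is shifted by $c$. Your hands-on Pythagorean expansion is the same argument the paper packages as $P_C(f+c) = P_C P_{1^\perp}(f+c) = P_C P_{1^\perp} f = P_C f$ with the projection formalism.
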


\begin{proof}
We have $\Lambda_f[\chi] = \divo z_{\rm min} - f$ on $\set{\chi = 0}$ where $z_{\rm min}$ minimizes $z \mapsto \norm{\divo z- f}_{L^2(U)}$ over $z \in CH(\psi; U)$ for some open $U \supset \set{\chi = 0}$ and Lipschitz $\psi \in [\chi]$. By Remark~\ref{rem:periodic-extension} we can assume that $\psi \in Lip(\Tn)$, $\chi \in L^2(\Tn)$. To simplify the notation, let us set $C := \set{\divo z: z \in CH(\psi; \Tn)}$ and let $P_E x$ denote the element of $E$ closest to $x$ for $E \subset L^2(\Tn)$ closed, convex and $x \in L^2(\Tn)$. We have $\Lambda_f[\chi] = P_{C - f} 0 = P_C f - f$ on $\set{\chi = 0}$.

To deduce \eqref{lambda-const-dep}, we only need to show that $P_C f = P_C(f + c)$ for all $c \in \R$. Observe that $C \subset 1^\perp$ where $1^\perp \subset L^2(\Tn)$ is the orthogonal subspace to $1 \in L^2(\Tn)$.
Indeed,  by Green's theorem \cite{Anzellotti}
\begin{align*}
(1, \xi)_{L^2(\Tn)} = (1, \divo z)_{L^2(\Tn)} = 0 \qquad \text{for all }\xi = \divo z \in C.
\end{align*}
In particular, the Pythagorean theorem implies
\begin{align*}
P_C(f + c) = P_C P_{1^\perp} (f + c) = P_C P_{1^\perp} f = P_C f.
\end{align*}
\eqref{lambda-const-dep} follows.
\end{proof}

Let us emphasize that Lemma~\ref{le:lambda-const-dep} holds only for constant $c$. It is not in general true that $\Lambda_f = \Lambda_0 - f$.

\begin{corollary}
\label{co:constant-diff-lambda}
If $\abs{f_1 - f_2} \leq M$ then
\begin{align*}
\abs{\Lambda_{f_1}[\chi] - \Lambda_{f_2}[\chi]} \leq M \qquad \text{a.e. on }\set{\chi = 0}.
\end{align*}
\end{corollary}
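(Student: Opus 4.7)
The plan is a direct combination of the comparison principle (Proposition~\ref{pr:Lambda-comparison}) with the constant-shift formula of Lemma~\ref{le:lambda-const-dep}, applied twice with opposite inequalities. From $|f_1 - f_2| \le M$ we extract the two-sided bound $f_2 - M \le f_1 \le f_2 + M$, which is exactly the form needed to feed into the monotonicity of $\Lambda$ with respect to the forcing.

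For the upper estimate, I would take the same facet $[\chi]$ on both sides (so the sign condition $\sign \chi \le \sign \chi$ holds trivially) and apply Proposition~\ref{pr:Lambda-comparison} with $f_1$ and $f_2 - M$ in the roles of the two forcings: since $f_1 \ge f_2 - M$, the conclusion is
\begin{align*}
\Lambda_{f_1}[\chi] \le \Lambda_{f_2 - M}[\chi] \qquad \text{a.e. on } \set{\chi = 0}.
\end{align*}
By Lemma~\ref{le:lambda-const-dep} with $c = -M$, $\Lambda_{f_2 - M}[\chi] = \Lambda_{f_2}[\chi] + M$, so $\Lambda_{f_1}[\chi] - \Lambda_{f_2}[\chi] \le M$.

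For the lower estimate, the symmetric argument with $f_2 + M \ge f_1$ gives $\Lambda_{f_2 + M}[\chi] \le \Lambda_{f_1}[\chi]$, and Lemma~\ref{le:lambda-const-dep} with $c = M$ rewrites this as $\Lambda_{f_2}[\chi] - M \le \Lambda_{f_1}[\chi]$. Combining the two inequalities yields the desired $|\Lambda_{f_1}[\chi] - \Lambda_{f_2}[\chi]| \le M$ a.e.\ on $\set{\chi = 0}$.

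There is no real obstacle here: the corollary is essentially a translation-invariance-plus-monotonicity argument, and both ingredients are already established. The only thing worth double-checking is that Proposition~\ref{pr:Lambda-comparison} legitimately applies with the same function on both sides (the hypothesis $\sign \psi_1 \le \sign \psi_2$ is satisfied with equality) and that the constant $\pm M$ qualifies as an $L^2$ perturbation in the sense of Lemma~\ref{le:lambda-const-dep}, which on the periodic extension $\Tn$ (cf.\ Remark~\ref{rem:periodic-extension}) is automatic.
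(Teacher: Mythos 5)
Your proposal is correct and follows essentially the same route as the paper: you combine the comparison principle (Proposition~\ref{pr:Lambda-comparison}) with the constant-shift identity (Lemma~\ref{le:lambda-const-dep}), applying them in each direction. The paper states one of the two inequalities and notes the other is analogous; you simply write out both, but the argument is identical.
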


\begin{proof}
We have $f_1 \leq f_2 + M$ and so by the comparison principle Proposition~\ref{pr:Lambda-comparison} and the dependence on contants Lemma~\ref{le:lambda-const-dep}, we have
\begin{align*}
\Lambda_{f_1}[\chi] \geq \Lambda_{f_2+M}[\chi] = \Lambda_{f_2}[\chi] - M.
\end{align*}
The other inequality is analogous.
\end{proof}

Although in this paper we do not  use the next Corollary on the stability of $\Lambda$ with respect to $f$, we give it as a simple application of Corollary~\ref{co:constant-diff-lambda}.

\begin{corollary}
\label{co:uniform-convergence}
If $f_k \rightrightarrows f$ then
\begin{align*}
\norm{\Lambda_{f_k}[\chi] - \Lambda_f[\chi]}_{L^\infty(\set{\chi = 0})} \to 0 \qquad \text{as } k \to \infty.
\end{align*}
\end{corollary}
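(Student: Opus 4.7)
The plan is to apply Corollary~\ref{co:constant-diff-lambda} directly, since uniform convergence $f_k \rightrightarrows f$ is exactly the statement that $\sup|f_k - f| \to 0$.

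Specifically, I would fix $\varepsilon > 0$ and choose $k_0$ such that $M_k := \norm{f_k - f}_{L^\infty(\Rn)} \leq \varepsilon$ for all $k \geq k_0$. Then Corollary~\ref{co:constant-diff-lambda} applied with $f_1 = f_k$, $f_2 = f$, $M = M_k$ yields
\begin{align*}
\abs{\Lambda_{f_k}[\chi] - \Lambda_f[\chi]} \leq M_k \qquad \text{a.e. on } \set{\chi = 0},
\end{align*}
so $\norm{\Lambda_{f_k}[\chi] - \Lambda_f[\chi]}_{L^\infty(\set{\chi=0})} \leq M_k \to 0$. Since $\varepsilon > 0$ was arbitrary, the conclusion follows.

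There is no real obstacle: the corollary is essentially a one-line restatement of the previous result applied to the sequence. The only point worth noting is that $f_k - f$ need not be constant, but Corollary~\ref{co:constant-diff-lambda} was precisely designed to handle non-constant differences via a uniform $L^\infty$ bound, so no further work (e.g. no density argument or compactness of the minimizers $z_{\min}$) is required.
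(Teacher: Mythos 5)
Your proof is correct and follows exactly the paper's route: the paper's proof is simply ``Clear from Corollary~\ref{co:constant-diff-lambda},'' and you have spelled out that one line by taking $M_k = \norm{f_k - f}_{L^\infty}$ and letting $k \to \infty$.
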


\begin{proof}
Clear from Corollary~\ref{co:constant-diff-lambda}.
\end{proof}

\subsection{Slicing}
\label{sec:slicing}

From this point on we shall assume that $\sigma$ is a \emph{crystalline} anisotropy, that is, the set $\set{\sigma < 1}$ is a polytope. Based on a fixed gradient $\hat p \in \Rn$, we consider the orthogonal decomposition of the space $\Rn = Z \oplus Z^\perp$ so that $Z$ is the linear subspace parallel to the affine hull $\aff \partial \sigma(\hat p)$. In other words, $Z$ is the smallest subspace such that $\partial \sigma(\hat p) \subset Z + \xi$ for some $\xi \in \Rn$. Let $k := \dim \partial \sigma(\hat p) := \dim Z$. By fixing orthogonal bases of $Z$ and $Z^\perp$, we can fix two linear isometries $\TT: \R^k \to Z$ and $\TT_\perp: \R^{n-k} \to Z^\perp$. Then we can write every $x \in \Rn$ uniquely as $x = \TT x' + \TT_\perp x''$, where $x' \in \R^k$ and $x'' \in \R^{n-k}$. For $k = 0$ or $k = n$, we take $x = x''$ or $x = x'$, respectively. Note that $x' = \TT^*x$ and $x'' = \TT_\perp^*x$, where $\TT^*$ and $\TT_\perp^*$ are the adjoints of $\TT$ and $\TT_\perp$, respectively. The precise choice of $\TT$ and $\TT_\perp$ for each $\hat p \in \Rn$ is not important as long as we use it consistently.

Using this decomposition, we introduce the positively one-homogeneous function $\SW_{\hat p}:\R^k \to \R$ as
\begin{align}
  \label{sliced-W}
  \SW_{\hat p}(w) := \lim_{\lambda \to 0+} \frac{\sigma(\hat p + \lambda \TT w) - \sigma(\hat
  p)}{\lambda}, \qquad w \in \R^k.
\end{align}
This function represents the infinitesimal structure of $\sigma$ near $\hat p$, sliced in
the direction of $Z$.

For a $(\SW_{\hat p})^\circ$-$(L^2)$ Cahn-Hoffman facet $[\chi]$ on $\R^k$ and $f \in L^2(\R^k)$, we denote $\Lambda_f[\chi]$ as defined in \eqref{minimal-divergence} with $\sigma = \SW_{\hat p}$ on $\R^k$ by $\Lambda_{\hat p, f}[\chi]$.

With $\hat p$, $k$ as above, we say that a function $\psi \in Lip(\R^k)$ is a \emph{$\hat
p$-admissible support function} if $[\psi]$ is a $(\SW_{\hat
p})^\circ$-$(L^2)$ Cahn-Hoffman facet.

\subsection{Definition of viscosity solutions}

We generalize the definition from \cite{GP_CPAM}. We recall the definition of test functions. The variables $x' = \TT^* x$, $x'' = \TT_\perp^* x$ refer to the sliced decomposition introduced in Section~\ref{sec:slicing}.

\begin{definition}[{cf. \cite[Definition~4.7]{GP_CPAM}}]
  \label{de:admissible-stratified-test-func}
Let $\hat p \in \Rn$, $(\hat x, \hat t) \in \Rn \times \R$, $k := \dim \partial \sigma(\hat p)$.
We say that $\varphi(x,t) = \psi(x') + \theta(x'') + \hat p \cdot x + g(t)$ is an \emph{admissible
stratified faceted test function at $(\hat x, \hat t)$ with slope $\hat p$} if $\theta \in C^1(\R^{n -
k})$, $\nabla \theta(\hat x'') = 0$, $g \in C^1(\R)$, and $\psi \in Lip(\R^k)$ is a $\hat p$-admissible
support function with $\hat x' \in \interior \set{\psi = 0}$.
Note that if $k = 0$, we have $\varphi(x,t) = \theta(x) + g(t)$ for some $\theta \in C^1(\Rn)$, $g \in
C^1(\R)$.
\end{definition}

\begin{definition}[{Viscosity solution, cf. \cite[Definition~5.2]{GP_ADE}}]
  \label{def:visc-solution}
  We say that an upper semicontinuous function $u$ is a \emph{viscosity subsolution} of
  \begin{align}
  \label{visc-sol-eq}
  u_t + F\Big(x, t, \nabla u, \divo (\nabla \sigma(\nabla u)) - f\Big) = 0
  \end{align}
  on $\Rn \times (0,
  T)$, $T > 0$, if for any $\hat p
  \in \Rn$, $\hat x \in \Rn$, $\hat t \in (0, T)$ and any admissible
  stratified faceted test function $\varphi$ at $(\hat x, \hat t)$ with slope $\hat p$ of the form $\varphi(x, t) = \psi(x') + \theta(x'') + \hat p \cdot x + g(t)$
  such that the function $u - \varphi(\cdot - h)$ has a global maximum on $\Rn \times (0, T)$ at $(\hat x, \hat t)$ for all
  sufficiently small $h' \in \R^k$ and $h'' = 0$, then
  \begin{align}
    \label{visc_subsolution}
    g'(\hat t) + F\Big(\hat x, \hat t, \hat p, \underline{\Lambda}_{\hat p, \hat f}[\psi](\hat x')\Big)  \leq 0,
  \end{align}
  where $\hat f(x') := f(\TT x' + \TT_\perp\hat x'', \hat t)$ and $\underline\Lambda$ is defined in \eqref{max-Lambda}.

  \emph{Viscosity supersolutions} are defined analogously as lower semicontinuous functions, replacing a global maximum with a global
  minimum, $\underline{\Lambda}$ with $\overline{\Lambda}$, and
  reversing the inequality in \eqref{visc_subsolution}.

  A continuous function that is both a viscosity subsolution and a viscosity supersolution is
  called a \emph{viscosity solution}.
\end{definition}

Note that \eqref{visc_subsolution} is weaker than the condition in \cite{GP_ADE,GP_CPAM} since
\[\essinf_{B_\delta(x)} \Lambda_{\hat p, \hat f}[\psi] \leq \underline\Lambda_{\hat p, \hat f}[\psi](x) \qquad \text{for any $\delta > 0$ small}.\]
This is to allow for the dependence of $f$ on $x$ and it will become important in the proof of stability in Section~\ref{sec:stability} since the right-hand side of \eqref{last-visc-cond} is not quite zero. But this relaxed condition \eqref{visc_subsolution} is still strong enough for the comparison principle to hold.

\section{Comparison principle}
\label{se:comp-principle}

We will establish the comparison principle between a viscosity subsolution $u$ and a viscosity supersolution $v$ under the additional assumption that at least one of them is \emph{continuous}. This is enough to show the existence of solutions by approximation as we can obtain a uniform modulus of continuity for the approximating solutions; see Section~\ref{sec:lipschitz-bound}. At this time we do not know how to establish the comparison principle for semicontinuous solutions.

In this section we allow for an explicit dependence of $F$ on the variables $x$ and $t$.
If we do not assume further regularity on at least one of the solutions, we will need to also assume that $F$ satisfies
\begin{align}
\label{F-regularity}
|F(x, t, p, \xi) - F(y, s, p, \eta)| \leq C(|p| + 1)(|x - y| + |t -s| + |\xi - \eta|)
\end{align}
for some constant $C$.
This covers both crystalline mean curvature flows with
\begin{align*}
F(p, \xi) = -\beta(p) \xi
\end{align*}
and anisotropic total variation flows
\begin{align*}
F(p, \xi) = - \xi,
\end{align*}
but forbids superlinear growth of $F$ in the last variable.

\begin{theorem}[Comparison principle]
\label{th:comparison-principle}
Let $\sigma$ be a crystalline anisotropy, $F \in C(\Rn \times \R \times \Rn \times \R)$ be non-increasing in the last variable, and $f \in C(\Rn \times \R)$ be Lipschitz continuous in space, uniformly in time.
Let $u$ be a viscosity subsolution of \eqref{pde} and let $v$ be a \emph{continuous} viscosity supersolution of \eqref{pde} on $Q_T := \Rn \times (0, T)$ for some $T > 0$ in the sense of Definition~\ref{def:visc-solution}. Suppose that $u$ and $v$ are bounded and that there exist constants $R> 0$,  $a \leq b$ such that $u \equiv a$ and $v \equiv b$ on $(\Rn \setminus B_R(0)) \times (0, T)$. Suppose that either
\begin{enumerate}
\renewcommand{\labelenumi}{(\roman{enumi})}
\renewcommand{\theenumi}{\roman{enumi}}
\item $v$ is Lipschitz continuous in space, uniformly in time; or
\label{v-Lipschitz}
\item $F$ satisfies \eqref{F-regularity} and $f$ is Lipschitz continuous in both variables.
\label{f-Lipschitz}
\end{enumerate}

Then
\begin{align}
\label{comp-init}
u(\cdot, 0) \leq v(\cdot, 0) \qquad \text{on } \Rn
\end{align}
implies
\begin{align*}
u \leq v\qquad \text{on } Q_T.
\end{align*}
\end{theorem}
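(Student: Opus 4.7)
The plan is to argue by contradiction following the flattening strategy from \cite{GG_ARMA_graphs} and its crystalline adaptations in \cite{GP_ADE,GP_CPAM}, with adjustments to handle the spatial $x$-dependence of $f$. Assume $\sup_{Q_T}(u-v) > 0$; after subtracting a small penalty $\gamma t$ with $\gamma > 0$, the supremum is still attained at an interior point $(\hat x, \hat t)$ with $\hat t > 0$, since $u \equiv a \leq b \equiv v$ on the complement of $B_R \times (0,T)$ and at $t = 0$. In case~\eqref{v-Lipschitz} I would double only in space,
\[
\Phi_\alpha(x,y,t) := u(x,t) - v(y,t) - \tfrac{\alpha}{2}|x-y|^2 - \gamma t,
\]
the spatial Lipschitz regularity of $v$ eliminating the need for time doubling; in case~\eqref{f-Lipschitz} I would also double in time with penalty $\tfrac{\alpha}{2}(t-s)^2$. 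Standard Crandall--Ishii estimates produce a maximizer $(x_\alpha, y_\alpha, t_\alpha, s_\alpha)$ with $\alpha|x_\alpha - y_\alpha|^2 + \alpha(t_\alpha - s_\alpha)^2 \to 0$. Let $\hat p_\alpha := \alpha(x_\alpha - y_\alpha)$ with limiting slope $\hat p$; after a small Wulff-like perturbation one may assume $\hat p \neq 0$, since otherwise testing with a spatially constant profile yields the contradiction $\gamma \leq 0$ directly.

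Set $k := \dim \partial \sigma(\hat p)$ and use the decomposition $x = \TT x' + \TT_\perp x''$ of Section~\ref{sec:slicing}. The central geometric step, following \cite[Sections~6-7]{GP_ADE}, is to construct an admissible stratified faceted test function out of the doubled penalization. Perturbing the penalty by arbitrary shifts $h = \TT h'$ with small $h' \in \R^k$ and assembling the family of resulting contact values produces a Lipschitz candidate support function $\psi$ on $\R^k$ whose zero set has $\hat x_\alpha'$ in its interior and which is a $(\SW_{\hat p_\alpha})^\circ$-$(L^2)$ Cahn-Hoffman facet. Combining $\psi$ with a smooth profile $\theta$ in the non-flat coordinate $x''$ (coming from the perpendicular part of the penalty, so that $\nabla \theta(\hat x_\alpha'') = 0$) and with the time profile $g$ from the penalty, one obtains
\[
\varphi(x,t) := \psi(x') + \theta(x'') + \hat p_\alpha \cdot x + g(t),
\]
a legitimate test function for $u$ from above at $(x_\alpha, t_\alpha)$ in the sense of Definition~\ref{de:admissible-stratified-test-func}. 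A symmetric construction yields a test function for $v$ from below at $(y_\alpha, s_\alpha)$.

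Applying Definition~\ref{def:visc-solution} and subtracting the two resulting inequalities gives
\[
\gamma + F\bigl(x_\alpha, t_\alpha, \hat p_\alpha, \underline{\Lambda}_{\hat p_\alpha, \hat f^u}[\psi](\hat x_\alpha')\bigr) \leq F\bigl(y_\alpha, s_\alpha, \hat p_\alpha, \overline{\Lambda}_{\hat p_\alpha, \hat f^v}[\psi](\hat y_\alpha')\bigr),
\]
where $\hat f^u(\cdot) := f(\TT\cdot + \TT_\perp x_\alpha'', t_\alpha)$ and $\hat f^v$ is defined analogously at $(y_\alpha'', s_\alpha)$. Spatial Lipschitz continuity of $f$ (uniform in time in case~\eqref{v-Lipschitz}; joint in case~\eqref{f-Lipschitz}) gives $\|\hat f^u - \hat f^v\|_\infty = o(1)$ as $\alpha \to \infty$, so Corollary~\ref{co:constant-diff-lambda} bounds the two $\Lambda$-quantities up to $o(1)$ plus the intrinsic gap $\overline\Lambda - \underline\Lambda$, itself controlled at the common contact coordinate by the flattening construction. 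Monotonicity of $F$ in its last variable---together with \eqref{F-regularity} in case~\eqref{f-Lipschitz} to absorb the $C(|\hat p_\alpha|+1)(|x_\alpha-y_\alpha| + |t_\alpha - s_\alpha|)$-mismatch in the first two arguments of $F$---allows one to pass $\alpha \to \infty$ and conclude $\gamma \leq 0$, a contradiction. The main obstacle is the construction of the support function $\psi$: verifying that the perturbation procedure yields a Lipschitz function admitting a Cahn-Hoffman vector field and placing $\hat x_\alpha'$ in the \emph{strict interior} of its zero set is precisely where the continuity of $v$ (or its spatial Lipschitz bound in case~\eqref{v-Lipschitz}) enters, in order to control the perturbed contact points uniformly over small $h' \in \R^k$.
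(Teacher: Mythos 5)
There is a genuine gap, and it concerns the very place where the continuity hypothesis on $v$ (or the Lipschitz alternative) actually does its work. You start from a penalization without any shift, $\Phi_\alpha(x,y,t)=u(x,t)-v(y,t)-\tfrac\alpha2|x-y|^2-\gamma t$, and invoke ``standard Crandall--Ishii estimates'' for $\alpha|x_\alpha-y_\alpha|^2\to0$. But the flattening construction from \cite{GP_ADE} that you rely on to produce a $\hat p$-admissible support function $\psi$ is \emph{built} on a shifted penalty $|x-y-\zeta|^2/(2\e)$ and on the flatness lemma (Prop.~7.4 and Lemma~7.6 of \cite{GP_ADE}), which asserts that for a suitable $\zeta_\e$ the map $\zeta\mapsto\sup\Psi_{\zeta,\e}$ is constant on a small disk in the affine directions of $\partial\sigma(p_\e)$. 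It is exactly this flatness, over a genuinely $n$-dimensional neighborhood of a well-chosen $\zeta_\e\neq 0$, that makes the family of contact values assemble into a Cahn--Hoffman faceted $\psi$ with $\hat x_\alpha'$ in the interior of $\{\psi=0\}$. Merely ``perturbing the penalty by arbitrary shifts $h=\TT h'$'' after the fact does not give you this; without the shift built into $\Psi$ from the outset you cannot invoke the flatness lemma and the faceted test function construction is not available.

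Once the shift $\zeta_\e\neq 0$ is present, the convergence $|x_\e-y_\e-\zeta_\e|^2/\e\to 0$ is \emph{not} the standard Crandall--Ishii estimate: the paper's Proposition~\ref{pr:point-max-dist} proves it by a separate argument using the continuity of $v$ (following \cite{DeZanSoravia}), and the paper explicitly warns that the standard proof fails when $\zeta_\e\neq0$. So the hypothesis that $v$ is continuous is used precisely to recover the penalty estimate, not, as you suggest at the end, to control the Cahn--Hoffman facet construction or to place $\hat x_\alpha'$ in the interior of the zero set (those follow from the flattening construction once the shift is in place). Your remark that one may reduce to $\hat p\neq0$ by a ``Wulff-like perturbation'' is also off the mark: $\hat p=0$ is simply the case $k=n$ in the slicing and is handled directly by Definition~\ref{def:visc-solution}; it does not make the viscosity inequality trivially $\gamma\le0$, since $\Lambda_{0,\hat f}[\psi]$ is generally nonzero. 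Finally, the alternative of doubling only in space in case~(i), while plausible in the smooth theory, is not what the paper does and is not obviously compatible with the faceted test function machinery, which constructs test functions simultaneously at two points $(x_\e,t_\e)$ and $(y_\e,s_\e)$ from the doubled variables.
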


\begin{proof}
Suppose that $\sup_{Q_T} u - v > 0$. Let us fix $\mu > 0$ such that
\begin{align*}
\sup_{Q_T} w  =: m_0 > 0.
\end{align*}
where
\begin{align*}
w(x,t) = u(x,t) - v(x,t) - \frac{\mu}{T- t}.
\end{align*}
Let $(\hat x, \hat t) \in Q_T$ be such that $w(\hat x, \hat t) = m_0$.

Consider
\begin{align*}
\Psi_{\zeta, \e} (x, t, y, s) := u(x,t) - v(y,s) - \frac{|x - y - \zeta|^2}{2\e} - \frac{|t - s|^2}{2\e} - \frac\mu{T - t}.
\end{align*}
For $\e > 0$ small enough and $|\zeta| \leq \sqrt{m_0 \e}$ this function has a maximum on $Q_T \times Q_T$.

\begin{proposition}
\label{pr:point-max-dist}
Assume that $u$ and $v$ satisfy all the assumptions in Theorem~\ref{th:comparison-principle}.
Let $(x_\e, t_\e, y_\e, s_\e)$ be a sequence of maxima of $\Psi_{\zeta_\e, \e}$ for some sequence $|\zeta_\e| \leq \sqrt{m_0 \e}$.
Then there is a constant $M$, independent of $\e$, so that we have
\begin{align}
\label{point-max-dist}
|x_\e- y_\e- \zeta_\e| \leq \sqrt{M \e}, \qquad |t_\e - s_\e|  \leq \sqrt{M\e},
\end{align}
and
\begin{align}
\label{p-limit}
\frac{|x_\e - y_\e - \zeta_\e|^2}{\e} \to 0 \qquad \text{as } \e \to 0.
\end{align}
\end{proposition}

Note that if $\zeta_\e = 0$ the limit \eqref{p-limit} is standard in the viscosity theory; see for example \cite[Proposition~3.7]{CIL}. However, the standard proof does not seem to apply with $\zeta_\e \neq 0$ and we need a continuity of $u$ or $v$ to recover \eqref{p-limit}. The proof of Proposition~\ref{pr:point-max-dist} uses the idea in \cite{DeZanSoravia}. We need \eqref{p-limit} to show that the left-hand side of \eqref{final-comp-ineq} below converges to zero as $\e \to 0$.

\begin{proof}
By maximality
\begin{align*}
\Psi_{\zeta_\e,\e}(x_\e, t_\e, y_\e, s_\e) \geq \Psi_{\zeta_\e, \e} (\hat x, \hat t, \hat x, \hat t) = w(\hat x, \hat t) - \frac{|\zeta_\e|^2}{2\e} \geq \frac{m_0}2 > 0.
\end{align*}
We deduce \eqref{point-max-dist} as $u(x, t) - v(y, s)$ is bounded above by some constant $M$ on $Q_T\times Q_T$.

Since at least one of $(x_\e, t_\e)$ and $(y_\e, s_\e)$ must lie in $B_R(0) \times [0, T)$, we can assume that $(x_\e, t_\e, y_\e, s_\e) \to (\bar x, \bar t) \in \cl{B_R(0)} \times (0, T)$ along a subsequence. Indeed, $\bar t > 0$ by \eqref{comp-init} and $\bar t < T$ since $\sup \Psi_{\zeta_\e, \e}(\cdot, t, \cdot, \cdot) \to -\infty$ as $t \to T-$.

Therefore we have
\begin{align*}
u(\bar x, \bar t) &- v(\bar x - \zeta_\e, \bar t) - \mu(T - \bar t)^{-1} =
\Psi_{\zeta_\e, \e} (\bar x, \bar t, \bar x - \zeta_\e, \bar t)\\
&\leq \Psi_{\zeta_\e, \e} (x_\e, t_\e, y_\e, s_\e)\\
&= u(x_\e, t_\e) - v(y_\e, s_\e) - \frac{|x_\e - y_\e - \zeta_\e|^2}{2\e} - \frac{|t_\e - s_\e|^2}{2\e} - \mu(T - t_\e)^{-1}.
\end{align*}
After rearranging,
\begin{align*}
\frac{|x_\e - y_\e - \zeta_\e|^2}{2\e} + \frac{|t_\e - s_\e|^2}{2\e}  &\leq u(x_\e, t_\e) - u(\bar x, \bar t) + v(\bar x - \zeta_\e, \bar t) - v(y_\e, s_\e)\\
&\qquad + \mu(T- \bar t)^{-1} - \mu(T - t_\e)^{-1}.
\end{align*}
Taking $\limsup_{\e \to 0+}$ of both sides, and using the upper semicontinuity of $u$ and the continuity of $v$, we recover \eqref{p-limit}.
Since every subsequence has a subsequence with limit $0$, we conclude that \eqref{p-limit} holds for the full sequence.
\end{proof}

We showed in \cite[Prop.~7.4 ]{GP_ADE} the following.

\begin{lemma}
There exists $\e_0>0$ such that
for every fixed $0 < \e < \e_0$ there is a set $\Xi_\e \subset \Rn$ on which $\partial \sigma$ is constant, and $\zeta_\e \in \Rn$, $\lambda_\e > 0$ with $|\zeta_\e| + 2 \lambda_\e <\kappa(\e) := \sqrt{m_0 \e}$ such that for every $\zeta$, $|\zeta - \zeta_\e| \leq 2 \lambda_\e$ there is a maximizer $(x, t, y, s)$ of $\Psi_{\zeta_\e, \e}$ on $Q_T \times Q_T$ such that
\begin{align*}
\frac{x - y - \zeta}\e \in \Xi_\e.
\end{align*}
\end{lemma}

Due to a bit of convex analysis explained in \cite{GP_ADE}, $\Xi_\e - \Xi_\e \subset (\aff \partial \sigma(p))^\perp$ for every $p \in \Xi_\e$ and therefore $\aff \partial \sigma(p) \subset (\Xi_\e - \Xi_\e)^\perp$.

We set $Z_\e := \aff \partial \sigma(p)$ for some $p \in \Xi_\e$, $k_\e := \dim Z_\e$, with the linear isometries $\TT_\e: \R^k \to Z_\e$ and $\TT_{\perp, \e}: \R^{n-k} \to Z_\e^\perp$. Since $p \perp Z_\e$ because $\sigma$ is one-homogeneous, the consequence of the flatness lemma, \cite[Lemma~7.6]{GP_ADE}, reduces to:
\begin{lemma}
For $\zeta_\e$, $\lambda_\e$ and $Z_\e$ as above, we have
\begin{align*}
\ell_\e(\zeta) = const \qquad \text{for } \zeta \in \zeta_\e + Z_\e,\ |\zeta - \zeta_\e| \leq 2 \lambda_\e,
\end{align*}
where
\begin{align*}
\ell_\e(\zeta) := \sup_{Q_T \times Q_T} \Psi_{\zeta, \e}.
\end{align*}
\end{lemma}

Choosing some point of maximum $(x_\e, t_\e, y_\e, s_\e) \in Q_T \times Q_T$ of $\Psi_{\zeta_\e, \e}$ and setting $p_\e := (x_\e - y_\e - \zeta_\e)/\e$, we can follow the construction of $p_\e$-admissible faceted test functions in \cite{GP_ADE,GP_CPAM}. This gives us two test functions $\varphi_u$, $\varphi_v$ for $u$ and $v$ at points $(x_\e, t_\e)$ and $(y_\e, s_\e)$, respectively, with
\begin{align*}
\sign \psi_u(\cdot + x_\e') \leq \sign \psi_v(\cdot + y_\e'),
\end{align*}
and there is $\delta > 0$ with $B_{\delta_\e}(0) \subset \set{\psi_u(\cdot + x_\e') = 0} \cap \set{\psi_v(\cdot + y_\e') = 0}$.
Here $x_\e' := \TT_\e^* x_\e$ and so on as before.
Then Lemma~\ref{le:lambda-const-dep} and Corollary~\ref{co:constant-diff-lambda} yield
\begin{align*}
\essinf &\bra{\Lambda_{p_\e, f_{u}(\cdot+ x_\e')}[\psi_u(\cdot + x_\e')]} \leq
\esssup \bra{\Lambda_{p_\e, f_{u}(\cdot + x_\e')}[\psi_v(\cdot + y_\e')]} \\
&\leq
\esssup \bra{\Lambda_{p_\e, f_{v}(\cdot +y_\e')}[\psi_v(\cdot + y_\e')]} + L_f|x_\e - y_\e| + \omega_f(|t_\e - s_\e|),
\end{align*}
where $f_{u}(w) := f(\TT w + \TT_\perp x_\e'', t_\e)$, $f_v(w) := f(\TT w + \TT_\perp y_\e'', s_\e)$ and $L_f$ is the Lipschitz constant of $f$ in space and $\omega_f$ is the modulus of continuity of $f$ on a sufficiently large bounded set, and $\essinf$ and $\esssup$ are taken over $B_{\delta}(0)$. Since $\delta$ can be taken arbitrarily small, we deduce
\begin{align}
\label{l-order}
\underline\Lambda_{p_\e, f_{u}}[\psi_u](x_\e') \leq
\overline\Lambda_{p_\e, f_{v}}[\psi_v](y_\e') + L_f|x_\e - y_\e| + \omega_f(|t_\e - s_\e|).
\end{align}

Then we have from the definition of viscosity subsolution and supersolution
\begin{align}
\label{tf-ineq}
\begin{aligned}
\frac{\mu}{(T-t_\e)^2} &+ F(x_\e, t_\e, p_\e, \underline\Lambda_{p_\e, f_{u}}[\psi_u](x_\e')) \\
&- F(y_\e, s_\e, p_\e, \overline\Lambda_{p_\e, f_{v}}[\psi_v](y_\e')) \leq 0.
\end{aligned}
\end{align}
On the other hand, using \eqref{l-order} and the monotonicity of $F$ in the last variable, we can estimate
\begin{align*}
F(x_\e, &t_\e, p_\e, \underline\Lambda_{p_\e, f_{u}}[\psi_u](x_\e'))
- F(y_\e, s_\e, p_\e, \overline\Lambda_{p_\e, f_{v}}[\psi_v](y_\e'))\\
&\geq F\big(x_\e, t_\e, p_\e, \overline\Lambda_{p_\e, f_{v}}[\psi_v](y_\e')  + L_f|x_\e - y_\e| + \omega_f(|t_\e - s_\e|)\big)\\
&\quad - F(y_\e, s_\e, p_\e, \overline\Lambda_{p_\e, f_{v}}[\psi_v](y_\e'))\\
&=: I.
\end{align*}

If we assume (\ref{v-Lipschitz}), that is, $v$ is Lipschitz in space, uniformly in time, with Lipschitz constant $L_v$, we must have $|p_\e| \leq L_v$. Therefore we can find a modulus of continuity $\omega_F$ of $F$ on a sufficiently large bounded subset of $\R^n \times \R \times \Rn \times \R$ and we can estimate
\begin{align*}
I \geq - \omega_F\big(|x_\e - y_\e| +|t_\e - s_\e| + L_f|x_\e - y_\e| + \omega_f(|t_\e - s_\e|)\big),
\end{align*}
where the right-hand side converges to 0 as $\e \to 0$.

On the other hand, if we assume (\ref{f-Lipschitz}), that is, that $F$ satisfies \eqref{F-regularity} and $f$ is Lipschitz in both variables, we can estimate
\begin{align}
\label{final-comp-ineq}
\begin{aligned}
I &\geq -C (L_f+1) (|p_\e| + 1)(|x_\e - y_\e| + |t_\e - s_\e|)\\
&\geq -K (|p_\e| + 1) \sqrt\e,
\end{aligned}
\end{align}
for some $K$ independent of $\e$, where we used \eqref{point-max-dist}.
But $|p_\e| \sqrt{\e} \to 0$ as $\e \to 0$ by \eqref{p-limit} and so the right-hand side converges to 0 as $\e \to 0$.

Either way, since by \eqref{tf-ineq} we have
\begin{align*}
I \leq - \frac\mu{T^2} < 0,
\end{align*}
we arrive at a contradiction.  We conclude that $\sup u - v \leq 0$. This finishes the proof of the comparison principle.
\end{proof}

\section{Stability}
\label{sec:stability}

The stability with respect to an approximation by solutions of regularized problems
\begin{align}
\label{regularized-problem}
u_t + F\big(t, \nabla u, \divo(\nabla \sigma_m(\nabla u)) - f\big) = 0
\end{align}
is an extension of the proof in \cite{GP_ADE}.
Recall that we consider two modes of regularization of $\sigma$:
\begin{enumerate}
\item $\sigma_m\in C^2(\Rn)$, $a_m^{-1} \leq \nabla^2 \sigma_m \leq a_m$ for some $a_m > 0$, and $\sigma_m \searrow \sigma$; or
\label{reg-parabolic}
\item $\sigma_m$ is an anisotropy, $\sigma_m \in C^2(\R^n \setminus \set0)$, $\set{\sigma_m < 1}$ is strictly convex, and $\sigma_m \to \sigma$ locally uniformly.
\label{reg-linear-growth}
\end{enumerate}

We have the following theorem for the approximation scheme \ref{reg-parabolic}.

\begin{theorem}
\label{th:stability}
Let $\sigma$ be a crystalline anisotropy, $F \in C(\R \times \R^n \times \R)$ be non-increasing in the last variable, and $f \in C(\Rn \times \R)$ be Lipschitz continuous in space, uniformly in time.
Let $\set{u_m}$ be a locally bounded sequence of viscosity subsolutions of \eqref{regularized-problem} on an open set $U \subset \R^n\times \R$ with $\sigma_m$ as in \ref{reg-parabolic}. Then ${\limsup^*}_{m\to\infty} u_m$ is a viscosity subsolution of \eqref{pde} on $U$. Similarly, if $u_m$ are viscosity supersolutions then ${\liminf_*}_{m\to\infty} u_m$ is a viscosity supersolution of \eqref{pde}.
\end{theorem}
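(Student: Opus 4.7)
The plan is to adapt the stability argument of \cite[Section~8]{GP_ADE} to the nonuniform force term $f$ and the relaxed subsolution condition \eqref{visc_subsolution} built around $\underline{\Lambda}$. By symmetry it suffices to treat the subsolution case. Let $\bar u := {\limsup^*}_{m\to\infty} u_m$ and suppose $\varphi(x,t) = \psi(x') + \theta(x'') + \hat p \cdot x + g(t)$ is an admissible stratified faceted test function at $(\hat x, \hat t)$ with slope $\hat p$ such that $\bar u - \varphi(\cdot - h)$ attains a global maximum at $(\hat x, \hat t)$ for every small $h' \in \R^k$, $h'' = 0$. After a standard strict penalization in $(x'', t, h')$, I may assume this maximum is strict. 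The goal is to establish
\[
g'(\hat t) + F\!\left(\hat t,\, \hat p,\, \underline{\Lambda}_{\hat p, \hat f}[\psi](\hat x')\right) \leq 0,
\]
with $\hat f(x') := f(\TT x' + \TT_\perp \hat x'', \hat t)$.

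The first main step is to construct a smooth approximating test function $\varphi_m$ adapted to $\sigma_m$. Fix $\delta > 0$. Using Remark~\ref{rem:periodic-extension} together with the definition \eqref{max-Lambda} of $\underline{\Lambda}$, I would pick a Cahn--Hoffman field $z$ on $\Tn$ representing $[\psi]$ whose sliced divergence satisfies $\divo z - \hat f \geq \underline{\Lambda}_{\hat p, \hat f}[\psi](\hat x') - \delta$ a.e.\ on some ball $B_r(\hat x')$. Since $a_m^{-1}\le \nabla^2\sigma_m\le a_m$, the gradient $\nabla\sigma_m$ is a diffeomorphism, and a resolvent-type construction in the spirit of Appendix~\ref{sec:resolvent-problem} produces a smooth sliced profile $\psi_m$ converging locally uniformly to $\psi$ with $\divo \nabla\sigma_m(\nabla\psi_m + \hat p)$ approximating $\divo z$ near $\hat x'$. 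Setting $\varphi_m(x,t) := \psi_m(x') + \theta(x'') + \hat p\cdot x + g(t)$ yields an admissible smooth test function for the regularized equation \eqref{regularized-problem}.

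Next, by the stability of strict maxima under the upper relaxed limit, there exist shifts $h_m \to 0$ with $h_m'' = 0$ and points $(x_m, t_m) \to (\hat x, \hat t)$ at which $u_m - \varphi_m(\cdot - h_m)$ attains its maximum. The classical viscosity subsolution property of $u_m$ applied at this contact point, exploiting the smoothness of $\varphi_m$, gives
\[
g'(t_m) + F\!\left(t_m,\, \hat p,\, \divo\nabla \sigma_m(\nabla\varphi_m)(x_m - h_m) - f(x_m, t_m)\right) \leq 0.
\]
As $m \to \infty$ the divergence term is bounded below by $\hat f(\hat x') + \underline{\Lambda}_{\hat p, \hat f}[\psi](\hat x') - \delta + o(1)$ by the construction of $z$, while $f(x_m, t_m) \to f(\hat x, \hat t) = \hat f(\hat x')$. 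The monotonicity of $F$ in the last variable and its continuity then deliver the desired inequality up to $\delta$; letting $\delta \to 0$ concludes the subsolution case.

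The main obstacle is the construction step: $\nabla\sigma_m(\nabla\psi_m + \hat p)$ must capture the \emph{sliced} minimal divergence with respect to the frozen force $\hat f$ rather than the full $f$. The relaxation from $\essinf_{B_\delta}$ to $\underline{\Lambda}=\lim_{\delta\to 0}\essinf_{B_\delta}$ in Definition~\ref{def:visc-solution} is precisely what makes this feasible: the transverse oscillation of $f$ over the support of $\varphi_m$ contributes an error of size $\lesssim L_f\cdot\diam\supp\psi_m$ by Corollary~\ref{co:constant-diff-lambda}, which is absorbed by the $\delta \to 0$ limit taken after $m\to\infty$. Proposition~\ref{pr:Lambda-comparison} then ensures that freezing $f$ at the transversal coordinate $\hat x''$ is a controlled perturbation of the true sliced minimal divergence that disappears in the limit.
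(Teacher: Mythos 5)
Your high-level strategy (contact points of $u_m$ with regularized test functions, passing to the limit using monotonicity of $F$) matches the paper, but there are genuine gaps in the central step.

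First, you cannot make the maximum strict in the facet direction. On $\set{\psi = 0}$ the test function is constant, so any penalization of $\bar u - \varphi$ must avoid the $x'$ variable; the best one can do is confine the contact set to $N\times\set{0}\times\set{0}$ by adding $|x''|^2$ and $|t|^2$ (as the paper does). Consequently the contact points $x_m - h_m$ range over a set comparable to the whole facet, and you never gain control of where in $\set{\psi=0}$ they land.

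Second, and more seriously, your construction of $\psi_m$ is not substantiated. You want a smooth $\psi_m \to \psi$ with $\divo\nabla\sigma_m(\nabla\psi_m + \hat p) \geq \hat f + \underline\Lambda_{\hat p,\hat f}[\psi](\hat x') - \delta$ \emph{pointwise} on a ball, claiming this is a ``resolvent-type construction.'' But the Cahn--Hoffman field $z_{\rm min}$ has $\divo z_{\rm min}\in L^2$ only, and nothing in Appendix~\ref{sec:resolvent-problem} or the hypotheses gives a pointwise barrier of this type. The paper's actual mechanism is quite different: it forms the Yosida resolvents $\psi_a$ and $\psi_{a,m}$ of $\E_f$ and $\E_{m,f}$, which are smooth (for fixed $a,m$), converge appropriately as $m\to\infty$ then $a\to0$, and satisfy $\divo\nabla\sigma_m(\nabla\psi_{a,m}+\hat p)-f = (\psi_{a,m}-\psi)/a$ exactly. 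Crucially, $(\psi_a-\psi)/a \to \Lambda_{\hat p,\hat f}[\psi]$ only in $L^2$, so to evaluate at the (unknown) contact point one needs the shift $z_a$ chosen at the minimum of $\psi_a$ on $B_\delta$, together with \cite[Lemma~8.5]{GP_ADE}, to deduce $\frac{\psi_a-\psi}{a}(x_a+z_a) \leq \min_{|w|\leq\delta}\frac{\psi_a-\psi}{a}(w)$, and only then can one pass from an $L^2$ limit to the $\essinf$ and ultimately to $\underline\Lambda$. Your proposal entirely omits this bridge between $L^2$ convergence and the pointwise value at the contact point, which is the technical heart of the stability proof.

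Finally, the order of limits matters: the paper sends $\ell\to\infty$ (i.e.\ $m_\ell\to\infty$), then $a\to0$, then lets the Lipschitz constant $L$ of the flattened test function and $\delta\to0$. Your single limit $m\to\infty$ followed by $\delta\to0$ does not account for the resolvent parameter $a$, without which the smooth approximants with the right divergence do not exist.
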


\begin{remark}
We can also allow for $f$ to be locally uniformly approximated by a sequence $f_m$ with a uniform Lipschitz constant in space, and for $F$ to be locally uniformly approximated by continuous functions $F_m$, nonincreasing in the last variable. This will only figure in \eqref{first-visc-ineq} where we need to add a subscript $m_l$ to $F$ and $f$. Due to the locally uniform convergence we recover \eqref{second-visc-ineq}.
\end{remark}

\begin{remark}
Note that unlike in the comparison principle, Theorem~\ref{th:comparison-principle}, we cannot allow $F$ to explicitly depend on the space variable $x$. Indeed, in the proof below, we only know that the accumulation points of $\set{x_a}_{a >0}$ lie in the set $N \ni \hat x$, and it is not clear that $\hat x$ is one of them to deduce \eqref{last-visc-cond}. Note that a typical perturbation like adding $|x|^4$ to $\varphi$ does not work since this does not yield an admissible faceted test function.
\end{remark}

For the approximating scheme \ref{reg-linear-growth} by anisotropies $\sigma_m$ we need to assume that the approximating sequence $\set{u_m}$ are solutions with uniformly bounded continuous initial data.

\begin{theorem}
\label{th:stability-linear-growth}
Let $\sigma$, $F$ and $f$ be as in Theorem~\ref{th:stability}.
Let $T > 0$ and let $u_m$ be a locally bounded sequence of viscosity solutions of \eqref{regularized-problem} on $\R^n \times (0, T)$ with $\sigma_m$ as in \ref{reg-linear-growth} with initial data $u_m(\cdot, 0) = u_{0,m}$, where $u_{0, m} \in C(\R^n)$ are uniformly bounded. Then ${\limsup^*}_{m\to\infty} u_m$ is a viscosity subsolution of \eqref{pde} and ${\liminf_*}_{m\to\infty} u_m$ is a viscosity supersolution of \eqref{pde} on $\R^n \times (0, T)$.
\end{theorem}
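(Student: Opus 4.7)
The plan is to reduce Theorem~\ref{th:stability-linear-growth} to Theorem~\ref{th:stability} via a second regularization of each $\sigma_m$. For every fixed $m$, I construct a sequence $\set{\sigma_{m,l}}_{l \in \N}$ of $C^2(\Rn)$ convex functions of scheme~(i) type, with $\sigma_{m,l} \searrow \sigma_m$ locally uniformly as $l \to \infty$. An explicit choice is to mollify $\sigma_m$ by a standard mollifier $\rho_{1/l}$ and add a term like $\ov{l}(|p|^2 + 1)^{1/2}$ to secure the lower Hessian bound, with constants tuned to keep the sequence monotone decreasing. Let $u_{m,l}$ denote the unique viscosity solution of \eqref{regularized-problem} with $\sigma_{m,l}$ in place of $\sigma_m$ and initial datum $u_{0,m}$; existence and uniqueness are classical since the equation is now uniformly parabolic.

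The key intermediate step is to show that, for each fixed $m$,
\begin{align*}
u_{m, l} \to u_m \qquad \text{locally uniformly on } \Rn \times (0,T) \quad \text{as } l \to \infty.
\end{align*}
This is a stability statement for viscosity solutions of the singular degenerate parabolic equation associated with the smooth-off-the-origin, strictly convex anisotropy $\sigma_m$. It follows from the comparison principle for the $\sigma_m$-problem together with the locally uniform convergence $\sigma_{m,l} \to \sigma_m$ and the uniform boundedness of $\set{u_{m,l}}_l$. The continuity and uniform boundedness of the initial data $u_{0,m}$ are exactly what allow the upper and lower half-relaxed limits of $\set{u_{m,l}}_l$ to be identified with $u_m$ via comparison.

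Once this is established, a standard diagonal argument selects $l(m) \to \infty$ so that $\norm{u_{m, l(m)} - u_m}_{L^\infty(K)} \to 0$ for every compact $K \subset \Rn \times (0, T)$, whence
\begin{align*}
\halflimsup_{m \to \infty} u_{m, l(m)} = \halflimsup_{m \to \infty} u_m, \qquad
\halfliminf_{m \to \infty} u_{m, l(m)} = \halfliminf_{m \to \infty} u_m.
\end{align*}
The doubly regularized sequence $\set{\sigma_{m, l(m)}}$ satisfies scheme~(i) and converges locally uniformly to $\sigma$; by adding, if necessary, constants $c_m \searrow 0$ we may even arrange it monotone decreasing. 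Theorem~\ref{th:stability} then applies to $\set{u_{m, l(m)}}$ and yields that $\halflimsup u_{m, l(m)}$ is a viscosity subsolution and $\halfliminf u_{m, l(m)}$ is a viscosity supersolution of \eqref{pde}. By the identifications above, the same is true for $\halflimsup u_m$ and $\halfliminf u_m$, completing the proof.

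The main obstacle is the intermediate stability $u_{m,l} \to u_m$ as $l \to \infty$. Although Theorem~\ref{th:stability} is stated only for a crystalline limit, its proof (together with the stability theory for singular degenerate parabolic equations whose singular set is $\set{\nabla u = 0}$, developed in earlier works such as \cite{GP_ADE,GP_CPAM}) adapts to the case of a smooth, strictly convex limit anisotropy $\sigma_m$. The assumption of continuous, uniformly bounded initial data is unavoidable here: it is what makes $u_m$ continuous and permits the use of comparison in the $\sigma_m$-problem to pin down the intermediate limit.
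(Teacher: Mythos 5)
Your approach is essentially the same as the paper's: for each fixed $m$ you introduce a second regularization by smooth, uniformly convex $\sigma_{m,l}$ with quadratic growth, solve the resulting uniformly parabolic problems with the fixed initial data $u_{0,m}$, pass to the limit $l\to\infty$ using the (standard) comparison principle for the singular degenerate parabolic equation with smooth strictly convex anisotropy $\sigma_m$, and then run a diagonal argument. This is precisely what the paper does (with $\delta$ in place of $l$), citing \cite{GP_ADE} for the details; the role of the continuous, uniformly bounded initial data in pinning down the intermediate limit by comparison is exactly as you identify it.

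Two points in the proposal are, however, not quite right and should be flagged. First, the explicit regularizer $\frac1l(|p|^2+1)^{1/2}$ is asymptotically linear, so its Hessian degenerates at infinity and does not supply the lower bound $a_m^{-1}\le\nabla^2\sigma_{m,l}$ required by scheme~(a); one should instead add a genuinely quadratic term such as $\frac1l|p|^2$ (after which the mollification handles $C^2$ regularity). Second, and more substantively, the claim that ``by adding constants $c_m\searrow 0$ we may arrange the diagonal sequence $\sigma_{m,l(m)}$ to be monotone decreasing to $\sigma$'' is not justified. Each $\sigma_{m,l(m)}$ has quadratic growth while $\sigma$ is linear, so the region on which the two functions are comparable only by a diverging additive constant grows with $m$; locally uniform convergence of $\sigma_m\to\sigma$ alone gives no control on $\sup_{|p|\le R_m}(\sigma-\sigma_{m,l(m)})_+$ when $R_m\to\infty$, and there is no reason the required $c_m$ go to zero. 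What is actually needed is not that Theorem~\ref{th:stability} applies verbatim to the diagonal sequence, but that its \emph{proof} applies --- concretely, that the energies $\bar{\E}_{m,l(m)}$ built from the diagonal $\sigma$'s still Mosco-converge to $\bar\E$, which follows from locally uniform convergence near the facet slope and does not require global monotonicity. This is why the paper is careful to say ``the rest follows the proof of Theorem~\ref{th:stability}'' rather than invoking the theorem as a black box, and defers the verification to \cite{GP_ADE}.
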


\begin{proof}
As in \cite{GP_ADE}, we approximate each $u_m$ by solutions $u_{m,\delta}$ of \eqref{regularized-problem} with initial data $u_{0, m}$ and $\sigma_{m, \delta}$ smooth with quadratic growth, as in \ref{reg-parabolic} in the limit $\delta \searrow 0$. The rest follows the proof of Theorem~\ref{th:stability}. We refer the reader to \cite{GP_ADE} for further details.
\end{proof}

\subsection{Proof of Theorem~\ref{th:stability}}
Let
\begin{align*}
u = {\limsup_{m \to \infty}}^* u_m.
\end{align*}
We follow the approach in \cite{GP_CPAM}. We start with the simpler setting when $\hat p = 0$ and $f = f(x)$ to explain the structure of the proof.

\medskip
Suppose that a $0$-admissible test function is of the form $\varphi(x,t) = \psi(x) + g(t)$ where $\psi$ is $0$-admissible with facet containing $0$ in its interior, and suppose that $u - \varphi(\cdot - h, \cdot)$ has a global maximum $0$ at $(x,t) = (0,0)$ for all $|h| \leq \rho$ for some fixed $\rho > 0$. By Remark~\ref{rem:periodic-extension}, we can modify $\psi$ away from the facet $\set{\psi = 0}$ by distance greater than $2\rho$ so that $\psi$ is a periodic Lipschitz function with a Cahn-Hoffman vector field. We also modify $f$ away from the facet so that it is periodic and Lipschitz. By rescaling, we can assume that $\psi$ is $\Z^n$ periodic so that $\psi \in Lip(\T^n)$. We have $\partial \E_f(\psi)\neq\emptyset$ where $\E_f$ is defined as in \eqref{Ef} over $\Tn$.
Since $\Lambda_{0, f}[\psi] = -\partial^0 \E_f(\psi)$ on $\set{\psi = 0}$, we can approximate it by the resolvent problems for $\E_f$ and $\E_{m,f}$,
\begin{align*}
\psi_a + a &\partial \E_f(\psi_a) \ni \psi, &
\psi_{a,m} + a &\partial \E_{m,f}(\psi_{a,m}) \ni \psi.
\end{align*}
Since the resolvent problem for $\E_{m,f}$ is uniformly elliptic, standard regularity yields $\psi_{a, m} \in C^2(\Tn)$. Moreover, $\psi_a$ and $\psi_{a,m}$ are Lipschitz continuous with Lipschitz constant $L + a L_f$, where $L$ is the Lipschitz constant of $\psi$ and $L_f$ is the Lipschitz constant of $f$; see Section~\ref{sec:resolvent-problem}.
Due to the Mosco convergence of $\E_{m,f}$ to $\E_f$, we deduce the convergence $\psi_{a,m} \to \psi_a$ in $L^2$. Therefore the convergence is uniform due to the Lipschitz continuity.
Finally, $(\psi_a - \psi)/a \to - \partial^0 \E_f(\psi)$ in $L^2(\T^n)$.

We define the $\rho$-neighborhood of the facet,
\begin{align*}
O := \set{x : \psi(y) = 0 \text{ for some } y, |y - x| \leq \rho}.
\end{align*}
Note that we did not modify $\psi$ above closer than $2\rho$ from the facet $\set{\psi = 0}$ and so $\psi(\cdot - w)$ was not modified on $O$ for all $|w| \leq \rho$. We define
\begin{align*}
\bar u(x) := \sup_{|t| \leq \rho} [u(x, t) - g(t)].
\end{align*}
We have $\bar u - \psi(\cdot - w)$ on $O$ for $|w| \leq \rho$, with equality at $x = 0$.

For $\delta := \rho/5$, we define the set
\begin{align*}
N := \set{x \in O: \bar u(x) \geq 0,\ \psi(x - w) \leq 0 \text{ for some } |w| \leq \delta}.
\end{align*}
Note that $N$ is bounded since $O$ is bounded.
Since $\bar u - \psi(x - z)$ has a maximum $0$ for any $|z| \leq \rho = 5\delta$, we showed in
\cite[Corollary~8.3]{GP_ADE} that
\begin{align*}
\bar u(x) \leq 0 \quad \text{and} \quad \psi(x - z) \geq 0 \qquad \text{for all } \dist(x, N) \leq 3\delta,\ |z| \leq \delta,
\end{align*}
and $\dist(N, \partial O) \geq 4\delta$.

The key consequence is that we can modify $\psi$ away from the facet while preserving its sign and the fact that all points of maxima of $\bar u - \psi(\cdot - z)$ in $3\delta$-neighborhood of $N$ all lie in $N$, whenever $|z| \leq \delta$. In particular, we can assume that $\psi$ has arbitrarily small Lipschitz constant $L > 0$ by simply multiplying $\psi$ by a small positive number. It is convenient to introduce $M^s$, the $s$-neighborhood of $N \times \set{0}$, as
\begin{align*}
M^s := \set{(x, t) : \dist(x, N) \leq s,\ |t| \leq s}.
\end{align*}
By adding $|t|^2$ to $g(t)$, we may also assume that $u - \varphi(\cdot - z, \cdot)$ takes on its maximum $0$ in the set $M^{3\delta}$ at $t = 0$, whenever $|z| \leq \delta$.

To be able to conclude \eqref{res-min} below, for every $a > 0$ we choose $z_a$ with $|z_a| \leq \delta$ satisfying $\psi(z_a) = \min_{|w| \leq \delta} \psi_a(w)$.
By the uniform convergence of $\psi_a \to \psi$, for all $a$ small enough all the maxima of $u - \varphi_a(\cdot + z_a, \cdot)$ in $M^{3\delta}$ lie in $M^\delta$, that is, $\argmax_{M^{3\delta}}[u - \varphi_a(\cdot + z_a, \cdot)] \subset M^\delta$. For any such small $a$ there exists a point of maximum $(x_a, t_a)$ in $M^\delta$ and a sequence $m_\ell \to \infty$ and a sequence $(x_\ell, t_\ell) \to (x_a, t_a)$ of points of maxima of $u_{m_\ell} - \varphi_{a, m_\ell}(\cdot + z_a, \cdot)$ in $M^{3\delta}$. Since $\psi_{a, m}$ are Lipschitz continuous with Lipschitz constant $L+aL_f$, we can also assume that $\nabla \psi_{a, m_\ell} \to p_a$ as $\ell \to \infty$ for some $|p_a| \leq L + aL_f$.

The definition of the viscosity subsolution $u_m$ implies
\begin{align}
\label{first-visc-ineq}
g'(t_\ell) + F\Big(t_\ell, \nabla \psi_{a, m_\ell}(x_\ell + z_a), \divo \big(\nabla\sigma_{m_\ell}(\nabla \psi_{a, m_\ell})\big)(x_\ell + z_a) - f(x_\ell)\Big) \leq 0.
\end{align}
But $\divo \big(\nabla\sigma_{m_\ell}(\nabla \psi_{a, m_\ell})\big) - f = -\partial^0 \E_{m_\ell, f}(\psi_{a, m_\ell}) = (\psi_{a, m_\ell} - \psi)/ a$, and so by the uniform convergence $\psi_{a, m_\ell} \to \psi_a$ we recover in the limit $\ell \to \infty$ the inequality
\begin{align}
\label{second-visc-ineq}
g'(t_a) + F\pth{t_a, p_a, \frac{\psi_a(x_a + z_a) - \psi(x_a + z_a)}a + f(x_a + z_a) - f(x_a)} \leq 0.
\end{align}
Due to the choice of $z_a$ and \cite[Lemma~8.5]{GP_ADE}, we have
\begin{align}
\label{res-min}
\frac{\psi_a - \psi}a (x_a + z_a) \leq \min_{|w|\leq \delta} \frac{\psi_a - \psi}a(w).
\end{align}
The monotonicity of $F$ in the last variable, uniform continuity of $F$ on compact sets and the $L_f$-Lipschitz continuity of $f$ yields
\begin{align*}
g'(t_a) + F\pth{t_a, p_a, \min_{|w|\leq \delta} \frac{\psi_a - \psi}a(w)} \leq \omega_F(L_f |z_a|),
\end{align*}
for a modulus of continuity $\omega_F$.
We now find a sequence $a_j \to 0$ such that $p_{a_j} \to p$ for some $|p| \leq L$, and
\begin{align*}
\lim_{j\to \infty} \min_{|w|\leq \delta} \frac{\psi_{a_j} - \psi}{a_j}(w) = \liminf_{a \searrow 0}
\min_{|w|\leq \delta} \frac{\psi_a - \psi}a(w).
\end{align*}
Since $t_{a_j} \to 0 = \hat t$, we recover
\begin{align*}
g'(0) + F\pth{0, p, \liminf_{a \searrow 0} \min_{|w|\leq \delta} \frac{\psi_a - \psi}a(w)} \leq \omega_F(L_f \delta).
\end{align*}
And since $(\psi_a - \psi)/a \to \Lambda_{0,f}[\psi]$ in $L^2(B_\delta(0))$, we have by the monotonicity of $F$ in the last variable
\begin{align*}
g'(0) + F(0, p, \essinf_{|w|\leq \delta} \Lambda_{0,f}[\psi](w)) \leq \omega_F(L_f\delta).
\end{align*}
As we explained above, $L > 0$ can be taken arbitrarily small, $|p| \leq L$ and so by continuity
\begin{align}
\label{last-visc-cond}
g'(0) + F\Big(0, 0, \essinf_{|w|\leq \delta} \Lambda_{0,f}[\psi](w)\Big) \leq \omega_F(L_f\delta).
\end{align}

Since we can take $\rho > 0$ and therefore $\delta > 0$ arbitrarily small, we deduce \eqref{visc_subsolution}.  We have verified \eqref{visc_subsolution} for $\hat p = 0$.

\bigskip

In the case of general $\hat p$, and $f = f(x,t)$, we have a more complicated $\hat p$-admissible test function $\varphi(x,t) = \psi(x') + \theta(x'') + \hat p \cdot x + g(t)$. Let $k$, $Z$, $\TT$ and $\TT_\perp$ correspond to $\hat p$. Suppose that $u - \varphi(\cdot - \TT w, \cdot)$ has a global max at $(\hat x, \hat t)$ for small $w \in \R^k$. By translation, we may assume that $(\hat x, \hat t) = (0, 0)$. By rotating the coordinate system, we may assume that $Z= \R^k \times \set0$, $Z^\perp = \set0 \times \R^{n-k}$ so that $x = (x', x'')$, $x' \in \R^k$, $x'' \in \R^{n-k}$.

We proceed as in \cite{GP_ADE}. First, recalling \cite[Corollary~8.3]{GP_ADE}, we note that by adding $|x''|^2$ to $\theta(x'')$ and $|t|^2$ to $g(t)$, we may assume that the maximum of $u - \varphi(\cdot - \TT w, \cdot)$ in $M^{3\delta}$ is attained only on $M^0 = N \times \set0 \times \set0$ for any small $w$, where
\begin{align*}
M^s := \set{(x, t): \dist(x', N) \leq s,\ |x''| \leq s, \ |t| \leq s}.
\end{align*}
By the above mentioned corollary and $\nabla \theta(0) = 0$, we may also assume that the Lipschitz constant of $\psi$ and $\theta$ on $M^{3\delta}$ is smaller than any chosen fixed $L > 0$.
By modifying $\psi$ away from $N$ and $\theta$ away from $0$, we can make them periodic, while preserving the Lipschitz constant $L$. As $N$ is bounded, scaling allows us to assume that they are $1$ periodic and that $\diam(N) < 1$. This way we can again consider a resolvent problem on $\T^n$, but with the reduced energy
\begin{align*}
\bar\E(v) := \int \bar\sigma(Dv) + \bar f v, \qquad v \in BV(\Tn) \cap L^2(\Tn),
\end{align*}
where
\begin{align*}
\bar\sigma(p) = \lim_{\lambda \searrow 0} \frac{\sigma(\hat p + \lambda p) - \sigma(\hat p)}\lambda,
\end{align*}
and
\begin{align*}
\bar f(x) := f(\TT x', 0), \qquad \dist(x', \set{\psi = 0}) \text{ small},
\end{align*}
and $\bar f$ is extended periodically away from the facet in the directions in $Z$. Note that $\bar\sigma$ is linear on $Z^\perp$, and $\bar f$ is constant in the directions in $Z^\perp$. In fact, $\TT x' = \TT \TT^* x$ is the orthogonal projection on $Z$.

As we showed in \cite[Lemma~3.9]{GP_ADE}, the solution $\bar \psi_a$ of the resolvent problem
\begin{align*}
\bar \psi_a + a \partial \bar\E(\bar \psi_a) \ni \bar \psi
\end{align*}
with $\bar \psi(x) = \psi(x') + \theta(x'')$ is of the form
\begin{align*}
\bar \psi_a(x) = \psi_a(x') + \theta(x''),
\end{align*}
where $\psi_a$ is the solution of the sliced resolvent problem
\begin{align*}
\psi_a + a \partial \SE(\psi_a) \ni \psi
\end{align*}
with the sliced energy
\begin{align*}
\SE(v) := \int \sigma^{\rm sl}_{\hat p}(D v) + \hat f v, \qquad v \in BV(\T^k) \cap L^2(\T^k),
\end{align*}
with $\hat f(w) := \bar f(\TT w)$.
We recall that
\begin{align*}
\frac{\psi_a - \psi}a \to \Lambda_{\hat p, \hat f}  \qquad \text{as $a \to 0$ in } L^2(\set{\psi = 0}).
\end{align*}

Let $\bar\psi_{a,m}$ be the solution of the resolvent problem
\begin{align*}
\bar \psi_{a,m} + a \partial \bar\E_m(\bar \psi_{a,m}) \ni \bar \psi,
\end{align*}
where
\begin{align*}
\bar\E_m (v) := \int \sigma_m(\nabla v + \hat p) + \bar f v, \qquad v \in H^1(\Tn).
\end{align*}
We recall that $\bar\E_m$ Mosco-converges to the energy $\int \sigma(Dv +\hat p) + \bar f v$, which is equal to $\bar \E$ for functions with sufficiently small Lipschitz constant since $\sigma$ and $\bar \sigma$ differ by a constant on a small neighborhood of $\hat p$ depending only on $\hat p$ since $\sigma$ is crystalline. Thus we have $\bar \psi_{a,m} \rightrightarrows \bar \psi_a$ as long as $L, a > 0$ are taken sufficiently small. For more details, see \cite{GP_ADE}.

For each $a > 0$, we select $z_a \in \R^k$ with
\begin{align*}
\psi_a(z_a) = \min_{|w| \leq \delta} \psi_a(w).
\end{align*}
For a fixed $a > 0$, there exist a sequence of points of maximum $(x_{a,m}, t_{a,m})$ of $u_m(x,t) - \bar\psi_{a,m}(x + \TT z_a) - \hat p \cdot x - g(t)$ on $M^{3\delta}$ and a point of maximum $(x_a, t_a)$ of $u(x,t) - \psi_a(x' + z_a) - \theta(x'') - \hat p \cdot x - g(t)$ on $M^{3\delta}$ such that $(x_{a,m}, t_{a,m}) \to (x_a, t_a)$ as $m \to \infty$ along a subsequence.

By the viscosity subsolution condition for \eqref{regularized-problem} we have
\begin{align*}
g'(t_{a,m}) + F\Big(&t_{a,m}, \nabla \bar \psi_{a, m}(x_a) + \hat p,\\
&\quad\divo\nabla\sigma_m(\nabla\bar\psi_{a,m} + \hat p) (x_{a,m} + \TT z_a) - f(x_{a,m}, t_{a,m})\Big) \leq 0.
\end{align*}
By the Lipschitz bound for $\psi_{a,m}$, there is $p_a$ such that $\nabla \bar \psi_{a, m}(x_{a,m}) \to p_a$ along a subsequence $m \to \infty$. We also have $|p_a| \leq L + aL_f$. Since $\divo\nabla\sigma_m(\nabla\bar\psi_{a,m} + \hat p) = (\bar\psi_{a,m} - \bar \psi)/a + \bar f$, continuity and uniform convergence yield along the subsequence $m \to \infty$ that
\begin{align*}
g'(t_a) + F\Big(&t_a, p_a + \hat p, \\
&\quad (\psi_a(x_a' + z_a) - \psi(x_a' + z_a))/ a + \bar f(x_a + \TT z_a) - f(x_a, t_a)\Big) \leq 0.
\end{align*}

Now we estimate
\begin{align*}
\bar f(x_a + \TT z_a) - f(x_a, t_a) &= f(\TT(x_a' + z_a), 0) - f(\TT x_a' + \TT_\perp x_a'', t_a) \\
&\leq \omega_f(|x_a''| + |t_a| + |z_a|),
\end{align*}
where $\omega_f$ is a modulus of continuity of $f$ on a sufficiently large subset of $\Rn \times \R$.
We use the above estimate and \cite[Lemma~8.5]{GP_ADE} to deduce
\begin{align*}
g'(t_a) + F\Big(t_a, p_a + \hat p, \min_{|w| \leq \delta}(\psi_a(w) - \psi(w))/ a\Big) \leq \omega_F(\omega_f(|x_a''| + |t_a| + \delta)),
\end{align*}
where $\omega_F$ is a modulus of continuity of $F$ on a sufficiently large subset of $\R \times \Rn \times \R$.
Now sending $a \searrow 0$ along a subsequence so that $p_a \to p$ for some $p$ with $|p| \leq L$ and using that $(\psi_a - \psi)/a \to \Lambda_{\hat p, \hat f}[\psi]$ in $L^2(B_\delta(0))$, $|x_a''| \to 0$ and $|t_a| \to 0$, we recover
\begin{align*}
g'(0) + F(0, p + \hp, \essinf_{|w|\leq \delta} \Lambda_{\hp, \hat f}[\psi](w)) \leq \omega_F(\omega_f(\delta)).
\end{align*}
Since we can take $L > 0$ and $\delta > 0$ as small as we want, we deduce
\begin{align*}
g'(0) + F(0, \hp, \underline\Lambda_{\hp, \hat f}[\psi](0)) \leq 0.
\end{align*}
Therefore the viscosity solution condition \eqref{visc_subsolution} is satisfied for any $\hp$-admissible test function.

This shows that $u$ is a viscosity subsolution of \eqref{pde}. Since the proof that ${\liminf_*}_{m\to\infty} u_m$ is a viscosity supersolution of \eqref{pde} is analogous, this finishes the proof of Theorem~\ref{th:stability}.

\section{Lipschitz bound}
\label{sec:lipschitz-bound}

In this section we show that the solutions of the approximating problems \eqref{regularized-problem} have a modulus continuity uniform in $m$.
We consider a level set equation of $V=g(\nu,\kappa_\sigma+f)$ of the form
\begin{equation} \label{E}
	u_t + |\nabla u| g \big( \nabla u/|\nabla u|, -\operatorname{div}\left(\nabla\sigma(\nabla u)\right) + f(x,t) \big) = 0
	\quad\text{in}\quad \mathbb{R}^n \times(0,T),
	\quad T>0,
\end{equation}
where the anisotropy $\sigma$ is smooth. For the following Lipschitz estimate we do not need to assume that $\sigma$ is positively one-homogeneous.
\begin{theorem} \label{L}
Assume that $\sigma \in C^2(\Rn \setminus \set0)$ is convex, $g \in C(\mathcal S^{n-1} \times \R)$ is Lipschitz in the second variable with a Lipschitz constant $L_g$ uniform in the first variable, $g$ is non-decreasing in the second variable, and $f \in C(\Rn \times[0,T])$ is Lipschitz in space with a Lipschitz constant $L_f$ uniform in time.
 Let $u \in C\left( \mathbb{R}^n \times [0,T] \right)$ be a solution of \eqref{E} in $\mathbb{R}^n \times (0,T)$.
 Assume that $u$ is constant outside $B \times [0,T]$, where $B$ is a (large) ball.
 If the initial data $u_0$ is Lipschitz, then
\begin{align}
\label{u-Lip-space}
	\left| u(x,t) - u(y,t) \right| \leq Le^{Mt} |x-y|, \quad
	x,y \in \mathbb{R}^n, \quad
	t \in [0,T]
\end{align}
with $M=L_g L_f$, $L=\operatorname{Lip}(u_0)$.
\end{theorem}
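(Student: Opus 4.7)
The plan is to argue by doubling of variables using the parabolic maximum principle for semicontinuous functions (Crandall--Ishii). Writing $L(t) := L e^{Mt}$, suppose for contradiction that $u(x,t) - u(y,t) - L(t)|x-y|$ is positive somewhere. For a suitably small $\mu > 0$ consider the penalization
\[\Psi_\e(x,y,t,s) = u(x,t) - u(y,s) - L(t)|x-y| - \frac{(t-s)^2}{2\e} - \frac{\mu}{T-t},\]
so that $\sup \Psi_\e > 0$. Since $u$ is constant outside $B \times [0,T]$, the supremum is attained at some $(x_\e, y_\e, t_\e, s_\e)$ with $t_\e, s_\e \in (0,T)$, $|t_\e - s_\e|^2/\e \to 0$, and, along a subsequence, $(x_\e, y_\e, t_\e, s_\e) \to (\bar x, \bar y, \bar t, \bar t)$ with $\bar x \neq \bar y$ (otherwise the supremum would be nonpositive). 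Hence $|x_\e - y_\e|$ stays bounded below for small $\e$, so the penalty $L(t)|x-y|$ is $C^2$ at the maximum point.

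Applying the parabolic Crandall--Ishii lemma, we obtain $\tau_1, \tau_2 \in \R$ and symmetric matrices $X, Y$ with $(\tau_1, p_\e, X) \in \overline{\mathcal P}^{2,+} u(x_\e, t_\e)$ and $(\tau_2, p_\e, Y) \in \overline{\mathcal P}^{2,-} u(y_\e, s_\e)$, where $p_\e = L(t_\e)(x_\e - y_\e)/|x_\e - y_\e|$ (so $|p_\e| = L(t_\e)$), $\tau_1 - \tau_2 = M L(t_\e) |x_\e - y_\e| + \mu/(T - t_\e)^2$, and crucially $X \leq Y$ (obtained by testing the matrix inequality against $(v,v)$, since the block Hessian of $L(t)|x-y|$ satisfies $\langle (v,v), \nabla^2 \phi\, (v,v)\rangle = 0$). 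The sub- and supersolution conditions for \eqref{E} read
\[\tau_1 + L(t_\e)\, g(\hat n_\e, -\trace(\nabla^2\sigma(p_\e) X) + f(x_\e, t_\e)) \leq 0,\]
\[\tau_2 + L(t_\e)\, g(\hat n_\e, -\trace(\nabla^2\sigma(p_\e) Y) + f(y_\e, s_\e)) \geq 0.\]
Since $\nabla^2\sigma(p_\e) \succeq 0$ by convexity and $X \leq Y$, we have $-\trace(\nabla^2\sigma(p_\e) X) \geq -\trace(\nabla^2\sigma(p_\e) Y)$, and monotonicity of $g$ in its second variable preserves this ordering; the only remaining error is $L_g|f(x_\e, t_\e) - f(y_\e, s_\e)| \leq L_g (L_f |x_\e - y_\e| + \omega_f(|t_\e - s_\e|))$ by Lipschitz of $g$ and $f$. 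Subtracting the two inequalities yields
\[M L(t_\e) |x_\e - y_\e| + \frac{\mu}{(T-t_\e)^2} \leq L(t_\e) L_g L_f |x_\e - y_\e| + L(t_\e) L_g\, \omega_f(|t_\e - s_\e|).\]
The choice $M = L_g L_f$ cancels the $|x_\e - y_\e|$ terms, leaving $\mu/(T-t_\e)^2 \leq L(t_\e) L_g\, \omega_f(|t_\e - s_\e|)$; letting $\e \to 0$ drives the right-hand side to $0$ while the left stays bounded below, the desired contradiction. Interchanging $x$ and $y$ then produces the full estimate \eqref{u-Lip-space}.

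The main obstacle is controlling the second-order divergence term in the equation. The direct second jets produced by the penalty $L(t)|x-y|$ are $X_{\mathrm{dir}} = L(t_\e)/|x_\e - y_\e|\, P$ and $Y_{\mathrm{dir}} = -L(t_\e)/|x_\e - y_\e|\, P$ with $P := I - \hat n_\e \hat n_\e^T$; these satisfy the \emph{wrong} ordering $X_{\mathrm{dir}} \geq Y_{\mathrm{dir}}$, and their size, of order $1/|x_\e - y_\e|$, does not vanish as $\e \to 0$. It is precisely the Crandall--Ishii lemma that replaces them by a pair with $X \leq Y$; combined with convexity of $\sigma$ and monotonicity of $g$, this forces the two second-order contributions to cancel in the sub/super difference, reducing the estimate to the first-order Hamilton--Jacobi-type balance between $L'(t) = M L(t)$ and the $f$-Lipschitz error $L(t) L_g L_f |x-y|$, which is exactly why the choice $M = L_g L_f$ is sharp.
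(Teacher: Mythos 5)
Your proof is correct and follows essentially the same route as the paper's: doubling only the time variable while retaining the Lipschitz penalty $Le^{Mt}|x-y|$ (smooth once $|x_\e-y_\e|$ is bounded away from zero), applying parabolic Crandall--Ishii, noting the matrix inequality forces the correct ordering of second jets because the spatial block Hessian of $L(t)|x-y|$ annihilates $(v,v)$, using monotonicity of $g$ and convexity of $\sigma$ to make the second-order terms cancel, and finally exploiting $M=L_gL_f$ to balance the first-order Lipschitz error from $f$. The only presentational difference is that the paper records the Crandall--Ishii output as $X_\alpha+Y_\alpha\le 0$ rather than your $X\le Y$, which is the same inequality in different sign conventions; otherwise the arguments coincide.
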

\begin{proof}
Suppose that the conclusion were false.
 Then,
\[
	m_0 := \sup_{\substack{Q \times Q \\ t=s}} \Phi (x,t,y,s)>0, \quad
	\Phi(x,t,y,s) := u(x,t)-u(y,s) - Le^{Mt}|x-y|,
\]
where $Q=\mathbb{R}^n\times[0,T]$.
 By uniform continuity of $u$ in $Q$, for any $\varepsilon>0$ there is $\delta>0$ such that $|t-s|\leq\delta$ implies that $\left|u(x,t)-u(x,s)\right|<\varepsilon$ for all $x\in\mathbb{R}^n$.
 We fix $\varepsilon=m_0/8$ so that
\[
	\sup_{\substack{Q \times Q \\ |t-s|\leq\delta}} \Phi(x,t,y,s) \geq m_0/2.
\]
We take $\beta>0$ sufficiently small and fix it so that
\[
	m_1 := \sup_{\substack{Q \times Q \\ |t-s|\leq\delta}} \left( \Phi(x,t,y,s) - \beta/(T-t) \right) \geq m_0/4.
\]
For $\alpha>1$, we consider
\[
	\Psi (x,t,y,s) := \Phi (x,t,y,s) - \beta/(T-t) - \alpha(t-s)^2.
\]
Since $u$ is continuous and is a constant on $(\Rn\setminus B)\times[0,T]$ with a big ball $B$, there is a maximizer $(x_\alpha, t_\alpha, y_\alpha, s_\alpha) \in Q \times Q$ of $\Psi$.
 Moreover, if $\alpha$ is sufficiently large, say $\alpha>\alpha_0$, then $|t_\alpha - s_\alpha|<\delta$.
 This yields
\[
	m_1 \geq \max_{Q \times Q} \Psi(x,t,y,s), \quad
	c_0 := \sup_{\alpha>\alpha_0} 1/|x_\alpha - y_\alpha| < \infty
\]
by uniform continuity of $u$ in $Q$.
 Moreover, we see that $t_\alpha - s_\alpha \to 0$ as $\alpha\to\infty$ since $m_1>0$.
 Since $L=\operatorname{Lip}u_0$, we observe that $\Phi(x,0,y,0)\leq 0$ for all $x,y\in\mathbb{R}^n$.
 Thus any accumulation point of $\{t_\alpha\}$ should not be zero since $t_\alpha-s_\alpha \to 0$.
 We may assume that $t_\alpha, s_\alpha$ are away from zero for sufficiently large $\alpha$, say $\alpha>\alpha_1\geq\alpha_0$.
 Evidently, $t_\alpha, s_\alpha < T$.
 We rewrite $\Psi$ as
\begin{multline*}
	\Psi(x,t,y,s) = u(x,t) - u(y,s) - \varphi(x,t,y,s), \\
	\varphi(x,t,y,s) := \alpha(t-s)^2 + \beta/(T-t) + Le^{Mt} |x-y|.
\end{multline*}
Our maximizer $z_\alpha = (x_\alpha,t_\alpha,y_\alpha,s_\alpha)$ should be in the interior of $Q \times Q$.

We are now in position to apply a parabolic version of Crandall--Ishii's lemma \cite{CIL}, \cite[Theorem 3.3.3]{GigaBook}.
 It provides $n \times n$ symmetric matrices $X_\alpha, Y_\alpha$ such that
\[
	\left( \varphi_t(z_\alpha), \nabla_x \varphi (z_\alpha), X_\alpha \right), \quad
	\left(\text{resp.}\ (-\varphi_s(z_\alpha), -\nabla_y\varphi(x_\alpha), -Y_\alpha) \right)
\]
can be approximated by super (resp.\ sub) parabolic semijets of $u$ at $(x_\alpha \ t_\alpha)$ (resp.\ $(y_\alpha,s_\alpha)$).
 Moreover, $X_\alpha + Y_\alpha \leq 0$ and the norms $\|X_\alpha\|, \|Y_\alpha\|$ are bounded by the spatial second derivatives of $\varphi$ at $z_\alpha$ for $\alpha>\alpha_1$ which are bounded since $c_0<\infty$.
 Since $u$ is a sub- and supersolution and $\nabla_x \varphi(z_\alpha) = -\nabla_y \varphi(z_\alpha) =: p_\alpha$, we end up with
\begin{equation} \label{IN1}
	\varphi_t + |p_\alpha| g \left( p_\alpha/|p_\alpha|,
	-\operatorname{tr}\left(\nabla^2 \sigma(p_\alpha)X_\alpha\right)
	+ f(x_\alpha, t_\alpha) \right) \leq 0 \quad\text{at}\quad z_\alpha
\end{equation}
\[
		- \varphi_s + |p_\alpha| g \left( p_\alpha/|p_\alpha|,
	-\operatorname{tr}\left(\nabla^2 \sigma(p_\alpha)(-Y_\alpha)\right)
	+ f(y_\alpha, s_\alpha) \right) \geq 0 \quad\text{at}\quad z_\alpha.  \]
In the second inequality, we may replace $-Y_\alpha$ by $X_\alpha$ since $X_\alpha+Y_\alpha\leq 0$ and $g$ is monotone in the last variable (ellipticity).
 The resulting inequality is
\begin{equation} \label{IN2}
		- \varphi_s + |p_\alpha| g \left( p_\alpha/|p_\alpha|,
	-\operatorname{tr}\left(\nabla^2 \sigma(p_\alpha)X_\alpha\right)
	+ f(y_\alpha, s_\alpha) \right) \geq 0 \quad\text{at}\quad z_\alpha.
\end{equation}
Since
\[
	 \left| g(p,\xi_1) - g(p,\xi_2) \right| \leq L_g |\xi_1-\xi_2|,
\]
subtracting \eqref{IN2} from \eqref{IN1} yields
\begin{equation} \label{IN3}
		\varphi_t(z_\alpha) + \varphi_s(z_\alpha) - |p_\alpha| L_g
		\left| f(x_\alpha\ t_\alpha) - f(y_\alpha, s_\alpha) \right| \leq 0.
\end{equation}
By a simple manipulation, we get
\begin{align*}
	\varphi_t (z_\alpha) &= \beta/(T-t_\alpha)^2 + LMe^{Mt_\alpha}|x_\alpha - y_\alpha| + 2\alpha(t_\alpha-s_\alpha), \\
	\varphi_s (z_\alpha) &= -2\alpha(t_\alpha-s_\alpha), \\
	p_\alpha &= Le^{Mt_\alpha}(x_\alpha-y_\alpha)/|x_\alpha-y_\alpha|\ \text{ so that }\ |p_\alpha|=Le^{Mt_\alpha}.
\end{align*}
Since $f$ is uniformly continuous in $B \times [0,T]$ and spatially Lipschitz uniformly in time,
\[
	\left| f(x_\alpha,t_\alpha) - f(y_\alpha,s_\alpha) \right|
	\leq L_f |x_\alpha - y_\alpha| + \omega(t_\alpha-s_\alpha),
\]
where $\omega$ is a modulus, i.e., $\omega \in C[0,\delta]$, $\omega\geq 0$ and $\omega(0)=0$.
 Since $1/(T-t_\alpha)^2 \leq 1/T^2$, \eqref{IN3} now yields
\begin{equation} \label{IN4}
		\beta/T^2 + LMe^{Mt_\alpha} |x_\alpha-y_\alpha|
		- |p_\alpha|L_g \left( L_f |x_\alpha-y_\alpha| + \omega(t_\alpha-s_\alpha) \right) \leq 0.
\end{equation}
Since $M=L_g L_f$, we see that
\[
	LMe^{Mt_\alpha} |x_\alpha-y_\alpha| - |p_\alpha| L_g L_f |x_\alpha-y_\alpha| = 0.
\]
Thus \eqref{IN4} implies
\[
	\beta/T^2 - |p_\alpha| L_g \omega(t_\alpha-s_\alpha) \leq 0.
\]
Sending $\alpha\to\infty$ and using $t_\alpha-s_\alpha \to 0$, we obtain $\beta/T^2 \leq 0$, which is a contradiction.
 The proof is now complete.
\end{proof}

As an application of Lipschitz bound, we further derive a uniform H\"older continuity in time when $\sigma$ is positively one-homogeneous.
\begin{theorem} \label{H}
Assume that $\sigma$, $g$ an $f$ are as in Theorem~\ref{L} and additionally assume that $\sigma$ is positively one-homogeneous.
 Let $u$ be the solution of \eqref{E} in $\mathbb{R}^n \times (0,T)$.
 Assume the same hypotheses of Theorem \ref{L} concerning $u$.
 Assume that $f$ is bounded. Then
\begin{align}
\label{u-Holder-time}
	\left| u(x,t) - u(x,s) \right| \leq A|t-s|^{1/2}, \quad
	x \in \mathbb{R}^n, \quad t,s \in [0,T]
\end{align}
with some constant $A$ depending only on positive constants $L$, $M$, $T$, $G$, $\lambda_+$, $\lambda_-$ such that
\begin{multline*}
	\sup_{r>0}\sup_{p\in \mathcal S^{n-1}} r \left| g\left( p, \pm(n-1)/r+\sup|f| \right) \right| \leq G, \\
	\lambda_-|x| \leq \sigma(x) \leq \lambda_+|x| \quad\text{for all}\quad x \in \mathbb{R}^n.
\end{multline*}
\end{theorem}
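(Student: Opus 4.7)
The plan is to prove \eqref{u-Holder-time} by sandwiching $u$ between explicit barriers built from Wulff shapes shrinking (respectively expanding) around the reference point $x_0$, and then applying the comparison principle for the smooth PDE~\eqref{E}. Fix $x_0 \in \Rn$ and $s \in [0,T)$. Combining Theorem~\ref{L} with the polar-duality estimate $|y| \leq \lambda_+ \sigma^\circ(y)$ (which follows from $\lambda_-|y| \leq \sigma(y) \leq \lambda_+|y|$), I obtain
\[
u(x,s) - u(x_0,s) \leq L e^{MT} \lambda_+ \sigma^\circ(x - x_0) =: K\sigma^\circ(x - x_0),
\]
which will provide the required ordering at time $s$ for the barriers below.

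Next, define the upper barrier
\[
\phi^+(x,t') := u(x_0,s) + K\sqrt{\sigma^\circ(x-x_0)^2 + c(t'-s)}, \qquad t' \geq s,
\]
with $c>0$ to be chosen. Its sublevel sets at fixed $t'$ are Wulff balls centered at $x_0$ of radius $R$ obeying $\dot R = -c/(2R)$; by construction $\phi^+(\cdot,s) \geq u(\cdot,s)$ and $\phi^+(x_0,t) = u(x_0,s) + K\sqrt{c(t-s)}$, which is exactly the bound I want. To check $\phi^+$ is a supersolution of \eqref{E}, I exploit the positive one-homogeneity of $\sigma$, the Legendre duality $\nabla\sigma(\nabla\sigma^\circ(y)) = y/\sigma^\circ(y)$, and Euler's identity to compute, for $x \neq x_0$,
\[
\nabla\sigma(\nabla\phi^+) = \frac{x-x_0}{\sigma^\circ(x-x_0)}, \qquad \divo\nabla\sigma(\nabla\phi^+) = \frac{n-1}{\sigma^\circ(x-x_0)}.
\]
Dividing the supersolution inequality for \eqref{E} by the positive factor $K/\sqrt{\sigma^\circ(x-x_0)^2+c(t'-s)}$ reduces it to
\[
\frac{c}{2} + r\,|\nabla\sigma^\circ(x-x_0)|\, g\!\left(\nu^+, -\frac{n-1}{r} + f(x,t')\right) \geq 0, \qquad r := \sigma^\circ(x-x_0).
\]
Monotonicity of $g$ together with the hypothesis $r|g(p, \pm(n-1)/r + \sup|f|)| \leq G$, applied with a suitably rescaled radius, yields $|g(\nu^+, -(n-1)/r + f(x,t'))| \leq G/r + O(\sup|f|)$. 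Combined with $|\nabla\sigma^\circ| \leq 1/\lambda_-$ and the fact that $u$ is constant outside a bounded ball, this fixes a $c$ depending only on $L, M, T, G, \lambda_\pm$ making $\phi^+$ a classical supersolution on $(\Rn \setminus \{x_0\}) \times (s,T)$. At the singular point, $\nabla\phi^+(x_0,t') = 0$ and $\phi^+(\cdot,t')$ attains its spatial minimum at $x_0$, so any viscosity test function touching $\phi^+$ from below at $(x_0,t')$ must satisfy $\nabla\varphi = 0$ and $\varphi_t = \phi^+_t = K\sqrt{c}/(2\sqrt{t'-s}) > 0$; hence the supersolution inequality there is automatic.

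Comparison for the smooth equation \eqref{E} now yields $u \leq \phi^+$ on $\Rn \times [s,T]$; evaluating at $x_0$ gives $u(x_0,t) - u(x_0,s) \leq K\sqrt{c(t-s)}$. The reverse inequality is obtained symmetrically with the lower barrier
\[
\phi^-(x,t') := u(x_0,s) - K\sqrt{\sigma^\circ(x-x_0)^2 + c(t'-s)},
\]
whose superlevel sets are expanding Wulff balls and whose subsolution property is verified analogously. Taking $A := K\sqrt{c}$ completes the proof with $A$ depending only on the listed constants. The main obstacle is the supersolution verification: transferring the hypothesis bound on $g$, which is stated at the specific arguments $\pm(n-1)/r + \sup|f|$, to the actual argument $-(n-1)/r + f(x,t')$ appearing in the PDE via a rescaling-and-monotonicity argument, and then dealing with the singularity of $\phi^\pm$ at $x = x_0$ (which, as explained, turns out to be harmless because $\nabla\phi^\pm$ vanishes there).
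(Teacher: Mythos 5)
Your proof follows essentially the same barrier argument as the paper: both compare $u$ against a supersolution built from $\sigma^\circ(x-x_0)$, exploit the zero-homogeneity of $\nabla\sigma$ to obtain the anisotropic curvature $(n-1)/\sigma^\circ(x-x_0)$, and control it through the hypothesis on $g$. The only notable difference is cosmetic: you use $u(x_0,s) + K\sqrt{\sigma^\circ(x-x_0)^2 + c(t'-s)}$, which yields the $|t-s|^{1/2}$ rate directly, whereas the paper uses the paraboloid $L_s\bigl(C(t-s)/\delta + \delta + \lambda_+^2\sigma^\circ(x-x_0)^2/(4\delta)\bigr) + u(x_0,s)$ and optimizes over $\delta$ at the end via Young's inequality.
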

\begin{proof}
We shall prove this Lemma by constructing a barrier.
 We first observe that
\[
	\nabla\sigma \left(\nabla\sigma^\circ(x)\right) = x/|x|
\]
so that $\operatorname{div}\nabla\sigma\left(\nabla\sigma^\circ(x)\right)=(n-1)/|x|$, where $\sigma^\circ(x)$ is the support function of the set $\left\{ \sigma(x)\leq 1 \right\}$, i.e.,
\[
	\sigma^\circ(x) = \sup\left\{ \langle x,y \rangle \mid \sigma(y) \leq 1 \right\}.
\]
The function $\sigma^\circ$ satisfies
\[
	\lambda^{-1}_+ |x| \leq \sigma^\circ(x) \leq \lambda^{-1}_-|x|
\]
and it is convex and positively one-homogeneous.
 (If $\nabla^2 \sigma$ is strictly positive on $\mathcal S^{n-1}$, this $\sigma^\circ$ may not be smooth but in the sense of viscosity solutions the indentity $\operatorname{div}\left(\nabla\sigma(\nabla\sigma^\circ(x)\right)=(n-1)/|x|$ still holds.)

For a given $s \in [0,T]$, by Theorem \ref{L}, $u(\cdot, s)$ is Lipschitz with a Lipschitz constant $L_s = Le^{Ms}$.
 For a given $\delta>0$ by Young's inequality, we have
\[
	|x| \leq \delta + |x|^2 /4\delta.
\]
This implies that
\[
	\left(u(x,s) - u(x_0,s)\right) / L_s \leq \delta + |x-x_0|^2/4\delta
	\leq \delta + \lambda^2_+ \sigma^\circ(x-x_0)^2/4\delta.
\]
If we take a constant $C$ large but depending only on $G$, we observe that
\[
	v(x,t) := L_s \left(C(t-s)/\delta + \delta + \lambda^2_+ \sigma^\circ(x-x_0)^2/4\delta \right)
	+ u(x_0, s)
\]
is a supersolution with
\[
	u(\cdot, s) \leq v(\cdot, s).
\]
By the standard comparison theorem \cite[Chapter 3]{GigaBook}, we conclude that
\[
	u(x,t) \leq v(x,t) \quad\text{for}\quad t \geq s, \quad
	x \in \mathbb{R}^n.
\]
In particular,
\[
	u(x_0,t) \leq L_s \left( C(t-s)/\delta+\delta \right)
	+ u(x_0,s)
\]
We take $\delta$ such that $\delta=\left( C(t-s) \right)^{1/2}$ to get
\[
	u(x_0,t) - u(x_0,s) \leq 2L_s \left( C(t-s)\right)^{1/2}
	\quad\text{for all}\quad t \geq s>0.
\]
A symmetric argument yields the estimate from below.
 Since $x_0,s$ are arbitrary, this completes the proof.
\end{proof}

We are ready to prove the existence of solutions of \eqref{pde} for $F$ of the form \eqref{level-set-F}.

\begin{theorem}
\label{th:existence}
Let $F$ be of the form \eqref{level-set-F}, where $g$ and $f$ are as in Theorem~\ref{L} and assume that $\sigma$ is a crystalline anisotropy. Then the equation \eqref{pde} has a unique global viscosity solution on $\Rn \times (0, \infty)$ for any Lipschitz initial data $u_0$ constant outside a large ball $B$. Moreover, $u$ is Lipschitz continuous in space \eqref{u-Lip-space} and 1/2-H\"{o}lder continuous in time \eqref{u-Holder-time}.
\end{theorem}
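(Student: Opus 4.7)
The plan is to construct a solution by vanishing-anisotropy approximation and then derive uniqueness from the comparison principle. First, I would approximate the crystalline anisotropy $\sigma$ by a sequence $\sigma_m$ as in scheme~\ref{reg-linear-growth}: $\sigma_m \in C^2(\Rn \setminus \set{0})$, $\set{\sigma_m < 1}$ strictly convex, $\sigma_m \to \sigma$ locally uniformly, chosen so that the uniform bound $\lambda_-|x| \leq \sigma_m(x) \leq \lambda_+|x|$ inherited from $\sigma$ holds with constants independent of $m$. For each $m$, equation~\eqref{regularized-problem} is a standard degenerate parabolic level-set equation with smooth anisotropy and spatially Lipschitz driving force, for which the classical viscosity theory (\cite{CIL}, \cite[Chapter~3]{GigaBook}) supplies a unique continuous viscosity solution $u_m$ on $\Rn\times[0,T]$ with $u_m(\cdot,0) = u_0$. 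A barrier argument using $\sigma_m^\circ$-Wulff shapes ensures $u_m \equiv u_0$ outside a ball $B_{R(T)}$ whose radius depends on $L_g$, $L_f$, $\|f\|_\infty$, $\lambda_\pm$ and $T$ but not on $m$.

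The next step is to pass to the limit via a uniform-in-$m$ modulus of continuity. Theorem~\ref{L} applied to $u_m$ yields the spatial estimate \eqref{u-Lip-space} with $L = \operatorname{Lip}(u_0)$ and $M = L_g L_f$, and its proof never uses the smoothness of $\sigma_m$, so these constants are $m$-independent. Theorem~\ref{H} then produces the temporal $1/2$-H\"older bound \eqref{u-Holder-time} with a constant $A$ depending only on $L$, $M$, $T$, $\lambda_\pm$, $\|f\|_\infty$ and the growth of $g$, again independent of $m$. By Arzel\`a--Ascoli, some subsequence of $\set{u_m}$ converges uniformly on $\Rn\times[0,T]$ to a continuous limit $u$ inheriting \eqref{u-Lip-space} and \eqref{u-Holder-time}, with $u(\cdot,0) = u_0$ and $u$ constant outside $B_{R(T)}$. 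By the stability Theorem~\ref{th:stability-linear-growth}, $\limsup^* u_m$ is a viscosity subsolution and $\liminf_* u_m$ a supersolution of \eqref{pde}; uniform convergence forces both to equal $u$, so $u$ is a viscosity solution. A standard diagonal argument in $T$ extends the construction to a global solution on $\Rn \times (0,\infty)$.

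Uniqueness follows from Theorem~\ref{th:comparison-principle}: if $u'$ is any other viscosity solution with the same initial data, the constructed $u$ is Lipschitz in space uniformly in time on every $[0,T]$, so hypothesis~(i) applies with $u$ playing the role of the Lipschitz continuous solution, yielding $u' = u$ by comparison in both directions (the proof of Theorem~\ref{th:comparison-principle} produces the required bound on $|p_\e|$ as soon as either sub- or supersolution is Lipschitz). The main technical obstacle in the whole program is the $m$-independent modulus of continuity for the approximations $u_m$; this is precisely what Theorems~\ref{L} and~\ref{H} are designed to deliver, their proofs being carefully arranged so that no dependence on $\nabla^2 \sigma_m$ or the strict convexity of $\sigma_m$ enters the constants, which is what allows the estimates to survive the crystalline limit.
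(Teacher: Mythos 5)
Your proof is correct and follows essentially the same route as the paper: regularize $\sigma$ by smooth anisotropies with $m$-independent $\lambda_\pm$, extract a uniformly convergent subsequence via the uniform Lipschitz and H\"older bounds of Theorems~\ref{L} and~\ref{H}, identify the limit as a solution via the stability theorem, and close with the comparison principle. The additional details you supply, namely the barrier argument keeping $u_m$ constant outside a fixed ball, the diagonal argument in $T$, and the observation that the bound on $|p_\e|$ in the proof of Theorem~\ref{th:comparison-principle} needs only \emph{one} of the sub/supersolution pair to be Lipschitz, are correct elaborations of steps the paper leaves implicit.
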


\begin{proof}
We can approximate $\sigma$ by a sequence of positively one-homogeneous functions $\sigma_m$ as in Theorem~\ref{th:stability-linear-growth}, so that they all satisfy the assumptions of Theorem~\ref{H} with the same $\lambda_\pm$.
By the classical viscosity solution theory, \eqref{regularized-problem} has a unique viscosity solution $u_m$ with the initial data $u_0$ for any $m$. For any $T > 0$, the sequence $\set{u_m}$ is uniformly Lipschitz in space by Theorem~\ref{L} and uniformly H\"{o}lder in time by Theorem~\ref{H}. Hence any subsequence has a further subsequence that converges uniformly on $\Rn \times [0, T]$ to some continuous function $u$ with $u(\cdot, 0) = u_0$. This limit must be the unique viscosity solution of \eqref{pde} by Theorem~\ref{th:stability-linear-growth}. In particular, the whole sequence converges to $u$ locally uniformly on $\Rn \times (0, \infty)$.
\end{proof}

\begin{proof}[Sketch of the proof of Theorem~\ref{th:existence-flow}]
Once we have existence, comparison and stability of solutions of \eqref{level-set-F}, we are able to prove invariance under the change of depending variables as in \cite{GP_ADE,GP_CPAM}. This yields the uniqueness of the level set of a solution. Our existence result (Theorem~\ref{th:existence}) now yields the unique existence of the level set flow.
\end{proof}

\section{Proof of nonexistence with \texorpdfstring{$x$-dependent $F$}{x-dependent F}}
\label{sec:proof-of-nonexistence}

In this section we show that we cannot allow the operator $F$ to depend on $x$ directly since in that case a solution might not exist in general. Let us consider the equation
\begin{align}
\label{sign-problem}
u_t + F\big(x, (\sign u_x)_x\big) = 0, \qquad x \in \R, t > 0,
\end{align}
with $F(x, \xi) = -\xi + f(x)$ and initial condition
\begin{align*}
u(\cdot, 0) = 0.
\end{align*}
This corresponds to anisotropy $\sigma(p) = |p|$ for $p \in \R$.

\begin{theorem}
\label{th:nonexistence}
Let $f \in Lip(\R)$ with compact support, $f \geq 0$ but $f \not\equiv 0$ (see Figure~\ref{fig:nonexistence}(a)).  Set $L = 1 / \max f$. Assume that
\begin{align*}
\supp f \subset (-L, L).
\end{align*}
Then there is no continuous solution with compact support.
\end{theorem}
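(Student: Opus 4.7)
The plan is to argue by contradiction via a spatial integration identity. First, I would show $u \leq 0$ on $\R \times [0, T]$ by comparing with the constant supersolution $v \equiv 0$. To verify that $0$ is a viscosity supersolution of \eqref{sign-problem}, take any facet test $\varphi = \psi + g$ with $\psi \leq 0$ vanishing on a facet $[-a, a]$ and $\psi < 0$ outside; the Cahn--Hoffman boundary conditions $z(\mp a) = \pm 1$ force the minimizer $z' = f + c$ to have $c = -(2 + \int_{-a}^a f)/(2a) \leq 0$, hence $\Lambda_f[\psi] \equiv c$. At any minimum of $0 - \varphi$ the function $g$ attains its maximum at $\hat t$, so $g'(\hat t) = 0$, and the supersolution inequality $g'(\hat t) + F(\hat x, \overline\Lambda_f[\psi]) \geq 0$ with $F(x,\xi) = -\xi + f(x)$ reduces to $-c + f(\hat x) \geq 0$, which holds since $c \leq 0 \leq f(\hat x)$. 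Theorem~\ref{th:comparison-principle} applied with $v \equiv 0$ (trivially Lipschitz) then yields $u \leq 0$.

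The crux of the argument is the integral identity
\[
\int_\R u(x, t)\, dx = -t \int_\R f(x)\, dx, \qquad t \in [0, T].
\]
Formally this follows by integrating $u_t = (\sign u_x)_x - f$ in $x$, the boundary flux $[\sign u_x]_{-\infty}^\infty$ vanishing because $u$ has compact support. For a rigorous derivation I would approximate $u$ by classical solutions $u_{m,\delta}$ of \eqref{regularized-problem} with doubly regularized $\sigma_{m,\delta}$ of quadratic growth, as in the proof of Theorem~\ref{th:stability-linear-growth}. By the stability theorem and a diagonal argument, $u_{m,\delta} \to u$ locally uniformly, and by Theorem~\ref{L} the spatial Lipschitz bound on $u_{m,\delta}$ is uniform. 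Pairing with a smooth cutoff $\chi_A$ supported in $[-A, A]$ and identically $1$ on $[-R, R]$, the classical identity $\frac{d}{dt}\int u_{m,\delta}\chi_A = -\int \sigma'_{m,\delta}(u_{m,\delta,x}) \chi_A' - \int f\chi_A$, together with $\sigma'_{m,\delta}(0) = 0$ and the compact support of the limit $u$ (so that the approximants are effectively concentrated), yields the identity in the triple limit.

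The contradiction now follows: since $\supp u \subset [-R, R] \times [0, T]$ is compact and $u$ is continuous, $u$ is bounded, $|u| \leq M < \infty$, so $\bigl|\int_\R u(\cdot, t)\, dx\bigr| \leq 2RM$; but the identity gives $\bigl|\int_\R u(\cdot, t)\bigr| = t \int f$, which violates this bound once $t > 2RM/\int f$ (since $\int f > 0$ as $f \geq 0$, $f \not\equiv 0$). The principal obstacle is the rigorous derivation of the integral identity, since viscosity solutions are not pointwise differentiable in $t$ and $(\sign u_x)_x$ on the facet is only defined distributionally through the minimal-divergence framework; the approximation by classical solutions reduces the problem to a classical divergence-theorem calculation, but care is required to show that the boundary flux vanishes in the limit $A \to \infty$, which critically uses the compact support of the supposed solution $u$ together with the uniform Lipschitz bound on its approximants.
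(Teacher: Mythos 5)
Your argument hinges on the integral identity $\int_\R u(\cdot,t)\,dx = -t\int_\R f\,dx$, obtained by integrating $u_t = (\sign u_x)_x - f$ and dropping the boundary flux on the grounds that $u$ is compactly supported. This identity is false, and the error is precisely the confusion the paper's facet calculus is designed to avoid: $(\sign u_x)_x$ is computed through a Cahn--Hoffman field $z$ with $z\in\partial\sigma(u_x)=[-1,1]$ wherever $u_x=0$, and on the unbounded flat tails of $u$ the minimizing $z$ is generically $\pm 1$, not $0$. The true balance is $\frac{d}{dt}\int u=[z]_{-\infty}^{\infty}-\int f$, where the flux $[z]_{-\infty}^\infty$ can be as large as $2$. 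You can check this against the paper's own explicit compactly supported continuous solution $u(x,t)=\max(-f(x)t,-f(\ell)t)$ of the twin equation \eqref{sign-p}, which is the same PDE as \eqref{sign-problem} and would therefore also satisfy your identity if it were valid; a direct computation using \eqref{fell-cond} gives $\int u(\cdot,t)=t(2-\int f)$, not $-t\int f$, the discrepancy being exactly the flux $2t$. The regularization step does not rescue the argument either: $\sigma_{m,\delta}'$ converges to a function with a jump at $0$, so even though $u_{m,\delta,x}\to 0$ where $u\equiv 0$, the weak-$\ast$ limit of $\sigma_{m,\delta}'(u_{m,\delta,x})$ on the transition region where $\chi_A'\neq 0$ is the Cahn--Hoffman field, not $0$, and $\int\sigma_{m,\delta}'(u_{m,\delta,x})\chi_A'$ does not vanish in the triple limit. (There is a secondary gap as well: your contradiction requires $t>2RM/\int f$, but $M=\sup|u|$ depends on the time horizon $T$, so on a fixed interval $[0,T]$ nothing prevents $M(T)$ from growing at least linearly in $T$, and no contradiction results.)

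The paper's proof instead squeezes $u$ between barriers. Your first half, $u\le 0$ via the constant supersolution, is right in spirit, although the computation should use $\Lambda_0$ rather than $\Lambda_f$: in Definition~\ref{def:visc-solution} applied to \eqref{sign-problem} the function $f$ sits in the explicit $x$-argument of $F$, not inside the divergence, so $\hat f\equiv 0$; the conclusion survives since a downward wedge $\psi\le 0$ with facet $[-a,a]$ has $\Lambda_0[\psi]=-1/a\le 0\le f(\hat x)$. The missing half is a lower bound by the stationary subsolution wedges $v_a(x)=\min\big(\max(a(|x|-\omega),a(L-\omega)),0\big)$, whose facet $[-L,L]$ contains $\supp f$; there any admissible test facet $\psi\geq 0$ contained in $[-L,L]$ satisfies $\Lambda_0[\psi]\geq 1/L=\max f$ by Proposition~\ref{pr:Lambda-comparison}, so $F=-\Lambda_0+f\le 0$ and $v_a$ is a subsolution. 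The comparison principle then forces $u\ge\sup_a v_a=0$, hence $u\equiv 0$, and the contradiction is that $u\equiv 0$ is not a subsolution: testing from above by a facet $[-a,a]$ with $a>L$ gives $\Lambda_0=1/a<1/L$ and $F(0,\Lambda_0)=-1/a+\max f>0$. That barrier construction is the key idea your proposal is missing; you should replace the integral identity with it.
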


\begin{proof}
We show it by contradiction with the comparison principle, Theorem~\ref{th:comparison-principle}, which applies to equation~\eqref{sign-problem}.

Let $u$ be a solution of \eqref{sign-problem} with initial data $0$ and compact support. Let us fix $\omega > L$. For given $a > 0$ set
\begin{align*}
v(x) = v_a(x) := \min\Big(\max\big(a(|x| - \omega), a(L - \omega)\big), 0\Big).
\end{align*}
Note that $v$ has a facet $[-L, L]$ that contains the support of $f$.
We claim that $w(x,t) = v_a(x)$ is a viscosity subsolution in the sense of Definition~\ref{def:visc-solution}. Indeed, for any faceted test function that touches $w$ from above on the facet $[-L, L]$, we have $\Lambda \geq \frac 1L$ and therefore
\begin{align*}
F(x, \Lambda) = -\Lambda + f \leq -\frac 1L + \max f = -\max f + \max f = 0.
\end{align*}
For a test function touching anywhere else we always get $F(x, \Lambda) \leq 0$.

\medskip
Therefore by the comparison principle we must have $v_a \leq u(\cdot, t)$ for all $t$ and $a > 0$ and so $u \geq 0$. On the other hand, $f \geq 0$ and therefore $u = 0$ is a supersolution so we conclude that $u \equiv 0$.

But by testing $u$ from above by a faceted test function with a facet longer than $L$, we can show that there are points where $F(x, \Lambda) = -\Lambda + f(x) > -\frac1L + f(x) = 0$ and so we see that $u \equiv 0$ is not a viscosity subsolution in the sense of Definition~\ref{def:visc-solution}.
\end{proof}
In fact, this shows that a supremum of subsolutions might not be a subsolution.

\medskip

One may be interested in what the solution is for $u_t + F((\sign u_x)_x - f(x)) = 0$ with $F(\xi) = - \xi$. Such a type of problems is studied in the framework of the maximal monotone operators \cite{GG2}. Although a Lipschitz bound is studied only for level set equations, it is not difficult to show uniform Lipschitz bounds and also $1/2$-H\"older continuity in time for the $\sigma_m$-approximation as we did in Section~\ref{sec:lipschitz-bound}. By our convergence results we conclude that the viscosity solution agrees with that in the theory of maximum monotone operators.

\begin{figure}
\centering
\begin{tikzpicture}
\def\myscale{2}
\begin{scope}[scale=\myscale,very thin]
\draw[->] (-1.4,0) -- (1.4,0) node[below] {$x$};
\draw[->] (0,-0.2) -- (0,1.2) node[left] {$f$};
\draw (1,2pt/\myscale) -- node[below] {$R$} (1,-2pt/\myscale);
\draw (-1,2pt/\myscale) -- node[below] {$-R$} (-1,-2pt/\myscale);
\draw[thin,domain=-1:1,smooth] plot (\x, {-((\x-1)*(\x+1)*(exp(-6.5*6.5*\x*\x)+0.9))/1.9});
\draw (0,-2/\myscale) node {(a)};
\end{scope}
\def\myscale{0.5}
\begin{scope}[xshift=2.4in,xscale=1.4,yscale=\myscale,very thin]
\draw[->] (-1.8,0) -- (2,0) node[below] {$x$};
\draw[->] (0,-3.2) -- (0,3.2);
\draw (1.57,2pt/\myscale) -- node[below] {$R$} (1.57,-2pt/\myscale);
\draw (-1.57,2pt/\myscale) -- node[below] {$-R$} (-1.57,-2pt/\myscale);
\draw (1.2,2pt/\myscale) -- node[below] {$\ell$} (1.2,-2pt/\myscale);
\draw (-1.2,2pt/\myscale) -- node[below] {$-\ell$} (-1.2,-2pt/\myscale);
\draw[thick,domain=-1.57:1.57,smooth] plot (-\x, {-2*sin(deg(-\x))+1}) node[left] {$Z_+$};
\draw[thick,domain=-1.57:1.57,smooth] plot (-\x, {-2*sin(deg(-\x))-1}) node[left] {$Z_-$};
\draw[thin,dashed] (-1.2, {-2*sin(deg(-1.2))-1}) -- node[below right=5pt] {$y$} (1.2, {-2*sin(deg(1.2)) + 1});
\draw (0,-2/\myscale) node {(b)};
\end{scope}
\end{tikzpicture}
\caption{(a) An example of the nonuniform forcing $f$ in Theorem~\ref{th:nonexistence}. (b) The construction of $\chi_\ell$.}
\label{fig:nonexistence}
\end{figure}
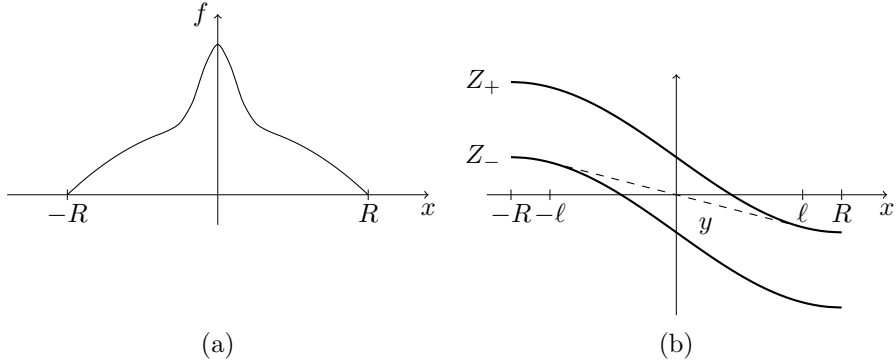

We here give an explicit example of the solution. We consider $f \in Lip(\R)$ with compact support, $f \geq 0$ but $f \not\equiv 0$. To simplify the explanation we assume that $f$ is even, that is, $f(x) = f(-x)$, $x\in \R$, and that $f$ is non-increasing for $f >0$. We take $R >0$ such that $\supp f = [-R, R]$. Let $Z_\pm$ be
\begin{align*}
Z_\pm(x) = -\int_0^x f(z)\,dz \pm 1.
\end{align*}
Since $Z_\pm$ is convex in $x$ for $x > 0$ and $Z_\pm-\pm 1$ is odd, there is a unique $\ell \in (0, R)$ such that the straight segment $y = y(x)$ connecting $(-\ell, Z_-(-\ell))$, $(\ell, Z_+(\ell))$ has the slope $Z'_+(\ell) = -f(\ell)$; see Figure~\ref{fig:nonexistence}(b). Moreover,
\begin{align*}
Z_-(x) \leq y(x) \leq Z_+(x)
\end{align*}
for $x \in [-\ell, \ell]$. By definition of $\Lambda_f$, we observe as in \cite{GG2}
\begin{align*}
\Lambda_f[\chi_\ell] = y'(x) = \frac{Z_+(\ell) - Z_-(-\ell)}{2\ell},
\end{align*}
if $\chi_\ell$ corresponds to the facet
\begin{align*}
\chi_\ell(x) :=
\begin{cases}
1, & x\notin [-\ell, \ell],\\
0, & x \in [-\ell, \ell].
\end{cases}
\end{align*}
One is able to characterize $\ell$ in a slightly different way since $y'(\ell) = -f(\ell)$. Since $\ell$ is the unique number such that
\begin{align*}
-Z_+'(\ell) = \frac{Z_-(-\ell) - Z_+(\ell)}{2\ell},
\end{align*}
it is the unique number satisfying
\begin{align}
\label{fell-cond}
f(\ell) = -\frac 1\ell +\frac 1{2\ell} \int_{-\ell}^\ell f(x)\,dx.
\end{align}
We consider the initial value problem
\begin{align}
\label{sign-p}
u_t + F\big((\sign u_x)_x - f(x)\big) = 0
\end{align}
with $F(\xi) = -\xi$. By using above $\ell$ one gets an explicit solution.

\begin{theorem}
Assume that $f$ is as above. Let $\ell \in (0, R)$ be a unique number satisfying \eqref{fell-cond}. Then the function
\begin{align*}
u(x,t) = \max(-f(x)t, -f(\ell) t)
\end{align*}
is the unique solution of \eqref{sign-p} with initial data $u_0 \equiv 0$.
\end{theorem}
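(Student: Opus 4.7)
The plan is to verify directly that $u(x,t) = \max(-f(x)t, -f(\ell)t)$ is both a viscosity sub- and supersolution of \eqref{sign-p} in the sense of Definition~\ref{def:visc-solution}, and then to deduce uniqueness from Theorem~\ref{th:comparison-principle}. The crucial input is already established in the paragraph preceding the theorem: the identity $\Lambda_f[\chi_\ell] = -f(\ell)$, derived via the characterization \eqref{fell-cond} of $\ell$, which makes the equation $u_t = \Lambda_f$ hold on the interior facet. For $t>0$ the graph of $u(\cdot,t)$ consists of an interior facet $[-\ell,\ell]$ at height $-f(\ell)t$, a monotone region $\ell<|x|<R$ where $u=-f(x)t$ (using that $f$ is non-increasing on $(0,R)$), and a flat outer region $|x|\geq R$ with $u\equiv 0$; the initial condition $u(\cdot,0)\equiv 0$ is immediate.

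The verification splits into three regimes according to the test point $(\hat x,\hat t)$, $\hat t>0$. On the monotone region $\ell<|\hat x|<R$, $u$ is locally Lipschitz with $u_x=-f'(\hat x)\hat t$ of constant sign (treating intervals on which $f'\equiv 0$ as extra facets by the same reasoning as below), so $\hat p=u_x(\hat x,\hat t)\neq 0$ and only classical $C^1$ test functions enter Definition~\ref{def:visc-solution}; since $(\sign u_x)_x=0$ locally, the PDE collapses to $u_t+f(\hat x)=0$, which holds exactly. For $|\hat x|>R$ the equation is trivial since $u\equiv 0$ and $f\equiv 0$ locally. The critical regime is $\hat x\in(-\ell,\ell)$, where $\hat p=0$ and one must consider admissible faceted test functions $\varphi(x,t)=\psi(x)+g(t)$ with facet $I_\psi=\set{\psi=0}$ containing $\hat x$ in its interior. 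For a subsolution test, the condition $\varphi\geq u$ in a neighborhood forces $I_\psi\subseteq[-\ell,\ell]$ (otherwise $\varphi=g(\hat t)=-f(\ell)\hat t$ on $I_\psi$ cannot dominate $u(x,\hat t)=-f(x)\hat t>-f(\ell)\hat t$ at points of $I_\psi\setminus[-\ell,\ell]$) and $g'(\hat t)=-f(\ell)$; since $\sign\psi\geq\sign\chi_\ell$, Proposition~\ref{pr:Lambda-comparison} yields $\underline\Lambda_f[\psi](\hat x)\geq -f(\ell)$, hence the subsolution inequality $g'(\hat t)-\underline\Lambda_f[\psi](\hat x)\leq 0$ follows. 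For a supersolution test (touching $u$ from below), the allowed configurations include $I_\psi\supseteq[-\ell,\ell]$ as well as ``downward bump'' tests with $I_\psi\subsetneq[-\ell,\ell]$ and $\psi<0$ just outside; in each case $\sign\psi\leq\sign\chi_\ell$ and Proposition~\ref{pr:Lambda-comparison} gives $\overline\Lambda_f[\psi](\hat x)\leq -f(\ell)$, proving the supersolution inequality. The boundary points $\hat x=\pm\ell,\pm R$ reduce to the previous cases by the same facet-comparison reasoning after sliding in the facet direction as required by Definition~\ref{def:visc-solution}.

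For uniqueness, let $v$ be any continuous viscosity solution with $v(\cdot,0)\equiv 0$; since $f=f(x)$ is trivially Lipschitz in both variables, alternative \ref{f-Lipschitz} of Theorem~\ref{th:comparison-principle} applies in both directions and yields $v=u$. The principal obstacle will be the systematic case analysis for admissible faceted tests at interior points of $[-\ell,\ell]$—in particular verifying the sign comparisons $\sign\psi\geq\sign\chi_\ell$ for subsolution tests and $\sign\psi\leq\sign\chi_\ell$ for supersolution tests across all asymmetric and ``downward bump'' configurations—followed by the correct application of Proposition~\ref{pr:Lambda-comparison} in each subcase; the corner points $\hat x=\pm\ell$, where $u$ transitions from the flat facet to the monotone region, require extra care in identifying which stratified test functions can actually touch $u$ from above or below there.
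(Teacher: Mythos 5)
Your overall strategy matches what the paper sketches (the paper's proof is essentially a one-line assertion citing the fact that the facet is $\chi_\ell$ for all $t>0$): identify the facet $[-\ell,\ell]$, use the identity $\Lambda_f[\chi_\ell]=-f(\ell)$ coming from \eqref{fell-cond}, verify the sub/supersolution inequalities via Proposition~\ref{pr:Lambda-comparison}, and obtain uniqueness from Theorem~\ref{th:comparison-principle}. The interior-facet argument you give is correct: for a subsolution test at $\hat x\in(-\ell,\ell)$ one indeed gets $\{\psi=0\}\subset[-\ell,\ell]$ and $\sign\psi\geq\sign\chi_\ell$, hence $\underline\Lambda_f[\psi](\hat x)\geq -f(\ell)=g'(\hat t)$; dually for supersolutions. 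The monotone and trivial regions are also fine.

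The place where the proposal is genuinely incomplete, and where your brief remark about ``sliding in the facet direction'' needs to become the main tool rather than an afterthought, is at $\hat x=\pm\ell$ and $\pm R$. Two issues. First, for a \emph{sub}solution test at $\hat x=\ell$ with $\hat p=0$ no admissible test function exists at all: the requirement in Definition~\ref{def:visc-solution} that $u-\varphi(\cdot-h,\cdot)$ keep its maximum at $(\ell,\hat t)$ for \emph{all} small $h$ fails already for $h>0$, because evaluating at $x=\ell+h$ gives $-f(\ell+h)\hat t-g(\hat t)>-f(\ell)\hat t-g(\hat t)$ whenever $f$ is strictly decreasing past $\ell$. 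So that case is vacuous, and one should say so rather than try to run the comparison. Second, for a \emph{super}solution test at $\hat x=\ell$ the shift condition does give the useful constraint $\psi\leq 0$ on the whole strip $[-\ell-\rho,\ell+\rho]$, hence $\sign\psi\leq\sign\chi_\ell$, and Proposition~\ref{pr:Lambda-comparison} then bounds $\Lambda_f[\psi]$ on $\{\psi=0\}\cap[-\ell,\ell]$ only. But $\overline\Lambda_f[\psi](\ell)$ is an essential supremum over a two-sided neighborhood of $\ell$, and the right half $(\ell,\ell+\delta)$ lies outside $\{\chi_\ell=0\}$, so the comparison proposition alone does not control it. To close this you need the explicit taut-string description of $\Lambda_f$ on a general 1D facet (as in \cite{GG2} and the construction preceding the theorem), from which one reads off that for any such $\psi$ the minimizer stays below $Z_+$ and gives $\Lambda_f[\psi]\leq -f$ on the part of the facet past $\ell$, whence $\overline\Lambda_f[\psi](\ell)\leq -f(\ell)$. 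The same obstacle-problem picture is needed at $\pm R$, where a facet reaching far into $\supp f$ makes the naive ``line-segment'' value of $\Lambda_f$ look negative until one notices the constraint $|z|\leq 1$ binds and forces $y\equiv Z_+$ there. In short: the comparison principle for $\Lambda_f$ is the right tool in the interior, but at the corners you must invoke both the translation-invariance clause of Definition~\ref{def:visc-solution} to restrict the admissible $\psi$ and the explicit 1D obstacle/taut-string computation to evaluate $\Lambda_f$ on the resulting facets.
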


Since the facet is $\chi_\ell$ for all $t > 0$, it is easy to see that this is a solution. For more examples of solutions see \cite{GG2}.

\appendix
\section{Regularity of the solution of the resolvent problem}

\label{sec:resolvent-problem}

In the perturbed test function method in the proof of Theorem~\ref{th:stability}, we need to solve the resolvent problem for $\E_f$ defined in \eqref{Ef}. Here we show that the solution of this resolvent problem is Lipschitz continuous. Suppose that $\psi$ is a Lipschitz function with Lipschitz constant $L$ and $f$ is a Lipschitz function with Lipschitz constant $L_f$.
Consider the solution $\zeta$ of
\begin{align*}
\zeta + a \partial \E_f(\zeta) \ni \psi.
\end{align*}
By shifting $\zeta_y := \zeta(\cdot - y)$, etc., we have
\begin{align*}
\zeta_y + a \partial \E_{f_y}(\zeta_y) \ni \psi_y,
\end{align*}
or, equivalently,
\begin{align*}
\zeta_y + a \partial \E_0(\zeta_y) \ni \psi_y - a f_y.
\end{align*}
By the Lipschitz continuity, we have
\begin{align*}
\psi_y - a f_y - (L + aL_f)|y| \leq \psi - a f \leq \psi_y - a f_y + (L + aL_f)|y|
\end{align*}
and hence the comparison principle for the elliptic resolvent problem yields
\begin{align*}
\zeta_y - (L+aL_f)|y| \leq \zeta \leq \zeta_y + (L+aL_f)|y|.
\end{align*}
We conclude that $\zeta$ is Lipschitz continuous with the Lipschitz constant $L +aL_f$.

\subsection*{Acknowledgments}
The first author was partly supported by the Japan Society for the Promotion of Sciences (JSPS) through Grant-in-Aid for Scientific Research Kiban A (No. 19H00639), Kiban A (No. 17H01091) and Challenging Pioneering Research (Kaitaku)(No. 18H05323).
The second author was supported by JSPS KAKENHI Wakate Grant (No. 18K13440).

\section*{References}

\begin{biblist}
\bib{AGu}{article}{
   label={AGu},
   author={Angenent, Sigurd},
   author={Gurtin, Morton E.},
   title={Multiphase thermomechanics with interfacial structure. II.\
   Evolution of an isothermal interface},
   journal={Arch. Rational Mech. Anal.},
   volume={108},
   date={1989},
   number={4},
   pages={323--391},
   issn={0003-9527},
   doi={10.1007/BF01041068},
}
\bib{Anzellotti}{article}{
   author={Anzellotti, Gabriele},
   title={Pairings between measures and bounded functions and compensated
   compactness},
   journal={Ann. Mat. Pura Appl. (4)},
   volume={135},
   date={1983},
   pages={293--318 (1984)},
   issn={0003-4622},
   doi={10.1007/BF01781073},
}
\bib{AG}{article}{
   author={Arisawa, Mariko},
   author={Giga, Yoshikazu},
   title={Anisotropic curvature flow in a very thin domain},
   journal={Indiana Univ. Math. J.},
   volume={52},
   date={2003},
   number={2},
   pages={257--281},
   issn={0022-2518},
   doi={10.1512/iumj.2003.52.2099},
}

\bib{Barles}{article}{
   author={Barles, G.},
   title={A weak Bernstein method for fully nonlinear elliptic equations},
   journal={Differential Integral Equations},
   volume={4},
   date={1991},
   number={2},
   pages={241--262},
   issn={0893-4983},
}

\bib{BGN}{article}{
   author={Bellettini, G.},
   author={Goglione, R.},
   author={Novaga, M.},
   title={Approximation to driven motion by crystalline curvature in two
   dimensions},
   journal={Adv. Math. Sci. Appl.},
   volume={10},
   date={2000},
   number={1},
   pages={467--493},
   issn={1343-4373},
}

\bib{BraidesMalusaNovaga}{article}{
  author={Braides, Andrea},
  author={Malusa, Annalisa},
  author={Novaga, Matteo},
  year={2017},
  eprint={https://arxiv.org/abs/1707.03342},
  title={Crystalline evolutions with rapidly oscillating forcing terms},
}

\bib{CIL}{article}{
   author={Crandall, Michael G.},
   author={Ishii, Hitoshi},
   author={Lions, Pierre-Louis},
   title={User's guide to viscosity solutions of second order partial
   differential equations},
   journal={Bull. Amer. Math. Soc. (N.S.)},
   volume={27},
   date={1992},
   number={1},
   pages={1--67},
   issn={0273-0979},
   doi={10.1090/S0273-0979-1992-00266-5},
}

\bib{CMNP}{article}{
   author={Chambolle, Antonin},
   author={Morini, Massimiliano},
   author={Novaga, Matteo},
   author={Ponsiglione, Marcello},
   title={Existence and uniqueness for anisotropic and crystalline mean
   curvature flows},
   journal={J. Amer. Math. Soc.},
   volume={32},
   date={2019},
   number={3},
   pages={779--824},
   issn={0894-0347},
   doi={10.1090/jams/919},
}

\bib{CMP}{article}{
   author={Chambolle, Antonin},
   author={Morini, Massimiliano},
   author={Ponsiglione, Marcello},
   title={Existence and uniqueness for a crystalline mean curvature flow},
   journal={Comm. Pure Appl. Math.},
   volume={70},
   date={2017},
   number={6},
   pages={1084--1114},
   issn={0010-3640},
   doi={10.1002/cpa.21668},
}

\bib{CN}{article}{
   author={Chambolle, Antonin},
   author={Novaga, Matteo},
   title={Existence and uniqueness for planar anisotropic and crystalline
   curvature flow},
   conference={
      title={Variational methods for evolving objects},
   },
   book={
      editor={Giga, Y.},
      editor={Tonegawa, Y.},
      series={Adv. Stud. Pure Math.},
      volume={67},
      publisher={Math. Soc. Japan, [Tokyo]},
   },
   date={2015},
   pages={87--113},
   doi={10.2969/aspm/06710087},
}

\bib{DeZanSoravia}{article}{
   author={De Zan, Cecilia},
   author={Soravia, Pierpaolo},
   title={A comparison principle for the mean curvature flow equation with
   discontinuous coefficients},
   journal={Int. J. Differ. Equ.},
   date={2016},
   pages={Art. ID 3627896, 6},
   issn={1687-9643},
   doi={10.1155/2016/3627896},
}

\bib{GigaBook}{book}{
   author={Giga, Yoshikazu},
   title={Surface evolution equations},
   series={Monographs in Mathematics},
   volume={99},
   note={A level set approach},
   publisher={Birkh\"{a}user Verlag, Basel},
   date={2006},
   pages={xii+264},
   isbn={978-3-7643-2430-8},
   isbn={3-7643-2430-9},
}

\bib{GG2}{article}{
   author={Giga, Mi-Ho},
   author={Giga, Yoshikazu},
   title={A subdifferential interpretation of crystalline motion under
   nonuniform driving force},
   conference={
    title={Dynamical systems and differential equations, Vol. I (Springfield, MO, 1996)},
    },
   journal={Discrete Contin. Dynam. Systems},
   date={1998},
   number={Added Volume I},
   pages={276--287},
   issn={1078-0947},
}

\bib{GG_ARMA_graphs}{article}{
   author={Giga, Mi-Ho},
   author={Giga, Yoshikazu},
   title={Evolving graphs by singular weighted curvature},
   journal={Arch. Rational Mech. Anal.},
   volume={141},
   date={1998},
   number={2},
   pages={117--198},
   issn={0003-9527},
}

\bib{GGNakayasu_GeomPDE}{article}{
   author={Giga, Mi-Ho},
   author={Giga, Yoshikazu},
   author={Nakayasu, Atsushi},
   title={On general existence results for one-dimensional singular
   diffusion equations with spatially inhomogeneous driving force},
   conference={
      title={Geometric partial differential equations},
   },
   book={
      series={CRM Series},
      volume={15},
      publisher={Ed. Norm., Pisa},
   },
   date={2013},
   pages={145--170},
}

\bib{GGRybka_ARMA}{article}{
   author={Giga, M.-H.},
   author={Giga, Y.},
   author={Rybka, P.},
   title={A comparison principle for singular diffusion equations with
   spatially inhomogeneous driving force for graphs},
   journal={Arch. Ration. Mech. Anal.},
   volume={211},
   date={2014},
   number={2},
   pages={419--453},
   issn={0003-9527},
   doi={10.1007/s00205-013-0676-y},
   note={Erratum: 212(2014), 707},
}

\bib{GGoR1}{article}{
  label={GGoR1},
   author={Giga, Yoshikazu},
   author={G\'{o}rka, Przemys\l aw},
   author={Rybka, Piotr},
   title={Nonlocal spatially inhomogeneous Hamilton-Jacobi equation with
   unusual free boundary},
   journal={Discrete Contin. Dyn. Syst.},
   volume={26},
   date={2010},
   number={2},
   pages={493--519},
   issn={1078-0947},
   doi={10.3934/dcds.2010.26.493},
}

\bib{GGoR2}{article}{
  label={GGoR2},
   author={Giga, Yoshikazu},
   author={G\'{o}rka, Przemys\l aw},
   author={Rybka, Piotr},
   title={A comparison principle for Hamilton-Jacobi equations with
   discontinuous Hamiltonians},
   journal={Proc. Amer. Math. Soc.},
   volume={139},
   date={2011},
   number={5},
   pages={1777--1785},
   issn={0002-9939},
   doi={10.1090/S0002-9939-2010-10630-5},
}

\bib{GGoR3}{article}{
  label={GGoR3},
   author={Giga, Yoshikazu},
   author={G\'{o}rka, Przemys\l aw},
   author={Rybka, Piotr},
   title={Evolution of regular bent rectangles by the driven crystalline
   curvature flow in the plane with a non-uniform forcing term},
   journal={Adv. Differential Equations},
   volume={18},
   date={2013},
   number={3-4},
   pages={201--242},
   issn={1079-9389},
}

\bib{GOS}{article}{
   author={Giga, Yoshikazu},
   author={Ohnuma, Masaki},
   author={Sato, Moto-Hiko},
   title={On the strong maximum principle and the large time behavior of
   generalized mean curvature flow with the Neumann boundary condition},
   journal={J. Differential Equations},
   volume={154},
   date={1999},
   number={1},
   pages={107--131},
   issn={0022-0396},
   doi={10.1006/jdeq.1998.3569},
}
\bib{GP_ADE}{article}{
   author={Giga, Yoshikazu},
   author={Po\v{z}\'{a}r, Norbert},
   title={A level set crystalline mean curvature flow of surfaces},
   journal={Adv. Differential Equations},
   volume={21},
   date={2016},
   number={7-8},
   pages={631--698},
   issn={1079-9389},
}

\bib{GP_CPAM}{article}{
   author={Giga, Yoshikazu},
   author={Po\v{z}\'{a}r, Norbert},
   title={Approximation of general facets by regular facets with respect to
   anisotropic total variation energies and its application to crystalline
   mean curvature flow},
   journal={Comm. Pure Appl. Math.},
   volume={71},
   date={2018},
   number={7},
   pages={1461--1491},
   issn={0010-3640},
   doi={10.1002/cpa.21752},
}

\bib{GR1}{article}{
   author={Giga, Yoshikazu},
   author={Rybka, Piotr},
   title={Stability of facets of self-similar motion of a crystal},
   journal={Adv. Differential Equations},
   volume={10},
   date={2005},
   number={6},
   pages={601--634},
   issn={1079-9389},
}

\bib{GR2}{article}{
   author={Giga, Yoshikazu},
   author={Rybka, Piotr},
   title={Stability of facets of crystals growing from vapor},
   journal={Discrete Contin. Dyn. Syst.},
   volume={14},
   date={2006},
   number={4},
   pages={689--706},
   issn={1078-0947},
   doi={10.3934/dcds.2006.14.689},
}

\bib{GR3}{article}{
   author={Giga, Yoshikazu},
   author={Rybka, Piotr},
   title={Facet bending in the driven crystalline curvature flow in the
   plane},
   journal={J. Geom. Anal.},
   volume={18},
   date={2008},
   number={1},
   pages={109--147},
   issn={1050-6926},
   doi={10.1007/s12220-007-9004-9},
}

\bib{GR4}{article}{
   author={Giga, Yoshikazu},
   author={Rybka, Piotr},
   title={Faceted crystal grown from solution - a Stefan type problem with a singular interfacial energy},
   book={
    title={Proceedings of the 4th JSAM-SIMAI Seminar on Industrial and Applied Mathematics},
    editor={Fujita, H.},
    editor={Nakamura, M.},
    series={Gakuto International Series, Mathematical Sciences and Applications 28},
    publisher={Gakkotosho},
    address={Tokyo},
    date={2008},
   },
   pages={31--41},
}

\bib{GR5}{article}{
   author={Giga, Yoshikazu},
   author={Rybka, Piotr},
   title={Facet bending driven by the planar crystalline curvature with a
   generic nonuniform forcing term},
   journal={J. Differential Equations},
   volume={246},
   date={2009},
   number={6},
   pages={2264--2303},
   issn={0022-0396},
   doi={10.1016/j.jde.2009.01.009},
}

\bib{LSU}{book}{
   author={Lady\v{z}enskaja, O. A.},
   author={Solonnikov, V. A.},
   author={Ural\cprime ceva, N. N.},
   title={Linear and quasilinear equations of parabolic type},
   language={Russian},
   series={Translated from the Russian by S. Smith. Translations of
   Mathematical Monographs, Vol. 23},
   publisher={American Mathematical Society, Providence, R.I.},
   date={1968},
   pages={xi+648},
}

\bib{Li}{book}{
   author={Lieberman, Gary M.},
   title={Second order parabolic differential equations},
   publisher={World Scientific Publishing Co., Inc., River Edge, NJ},
   date={1996},
   pages={xii+439},
   isbn={981-02-2883-X},
   doi={10.1142/3302},
}

\bib{Lu}{book}{
   author={Lunardi, Alessandra},
   title={Analytic semigroups and optimal regularity in parabolic problems},
   series={Modern Birkh\"{a}user Classics},
   note={[2013 reprint of the 1995 original]},
   publisher={Birkh\"{a}user/Springer Basel AG, Basel},
   date={1995},
   pages={xviii+424},
   isbn={978-3-0348-0556-8},
   isbn={978-3-0348-0557-5},
}

\bib{MR1}{article}{
   author={Mucha, Piotr B.},
   author={Rybka, Piotr},
   title={A new look at equilibria in Stefan-type problems in the plane},
   journal={SIAM J. Math. Anal.},
   volume={39},
   date={2007/08},
   number={4},
   pages={1120--1134},
   issn={0036-1410},
   doi={10.1137/060677124},
}

\bib{MR2}{article}{
   author={Mucha, Piotr B.},
   author={Rybka, Piotr},
   title={A caricature of a singular curvature flow in the plane},
   journal={Nonlinearity},
   volume={21},
   date={2008},
   number={10},
   pages={2281--2316},
   issn={0951-7715},
   doi={10.1088/0951-7715/21/10/005},
}

\bib{T91}{article}{
   author={Taylor, Jean E.},
   title={Constructions and conjectures in crystalline nondifferential
   geometry},
   conference={
      title={Differential geometry},
   },
   book={
      title={Proceedings of the Conference on Differential Geometry, Rio de Janeiro},
      editor={Lawson, B.},
      editor={Tanenblat, K.},
      series={Pitman Monogr. Surveys Pure Appl. Math.},
      volume={52},
      publisher={Longman Sci. Tech., Harlow},
   },
   date={1991},
   pages={321--336},
   doi={10.1111/j.1439-0388.1991.tb00191.x},
}

\end{biblist}

\end{document}